\def\CC{{C\hspace{-.05em}\raisebox{.4ex}{\tiny\bf ++} }}
\newcommand{\off}[1]{}
\newtheorem{proposition}{Proposition}
\newenvironment{proof}{\emph{Proof.}}{$\Box$}
\newtheorem{remark}{Remark}
\DeclareMathOperator{\argmin}{argmin}
\newcommand{\uargmin}[1]{\underset{#1}{\argmin}\;}
\newcommand{\Un}{\mathbf{1}}
\newcommand{\R}{\mathbb{R}}
\newcommand{\norm}[1]{\|{#1}\|}
\newcommand{\dime}{s}
\def\IR{\relax{\rm I\kern-.18em R}}
\def\<{\left\langle}
\def\>{\right\rangle}
\DeclareMathOperator{\diag}{diag}
\begin{document}

\title{Approximation of Wasserstein distance with Transshipment}
\date{}
\author{Nicolas Papadakis\thanks{CNRS, Institut de Math\'ematiques de Bordeaux, 33405 Talence,  France}}
\maketitle

\begin{abstract}
An algorithm for approximating the $p$-Wasserstein distance  between histograms defined on unstructured discrete grids is presented.  It is based on the computation of a barycenter constrained to be supported on a low dimensional subspace, which corresponds to a transshipment problem. A multi-scale strategy is also considered. The method provides sparse transport matrices and can be applied to large  and non structured data.

\end{abstract}

\section{Introduction}
The computation of optimal transportation between two discrete normalized mesaures $\mu_x$ and $\mu_y$  defined from $x\in\Omega\subset \R^d$ to $[0;1]$ remains a challenging problem when an accurate discretization of the  domain $\Omega$ is  considered. 
Optimal transportation requires to define a ground distance between points $x,y\in\R^d$ of the domain $\Omega$. This ground metric is then used to measure how much it costs to move $\mu_x(x)$ to $\mu_y(y)$. Ground distances  $||x-y||^p$ are here considered for $p\geq 1$, which leads to the $p-$Wasserstein distance between $\mu_x$ and $\mu_y$ (see \cite{villani2008optimal,santambrogio2015optimal,2018arXiv180300567P} for  more detailed introductions). Such distances give robust metrics in retrieval applications for $1\leq p<2$  \cite{Rubner98,Pele-eccv2008,Pele-ICCV,hurtut2008}.
The underlying sparse transport matrix is also a useful tool for interpolation and transfer purposes \cite{Rabin11,Ferradans2014,2015-solomon-siggraph}.

 \paragraph{Computing Wasserstein distances} 
 The computation of Wasserstein distances is only explicit for $d=1$. A standard approach to estimate Wasserstein distances when $d>1$ consists in  pre-computing a  cost matrix  $||x-y||^p$ for $x,y\in\Omega$ and then estimating the whole transport matrix which dimension grows quadratically with $n=|\Omega|$. Linear programming or transportation simplex \cite{Luenberger2015,journal.pone.0110214} can be applied to estimate $p-$Wasserstein distance for $p\geq 1$ but they are  limited in practice to low dimensional problems, i.e. small values of $n$, for complexity  and storage issues.  The implementation provided in \cite{Bonneel} nevertheless allows to tackle problems of interesting  dimensions, i.e. $\Omega$ discretized with  more than $n=10^4$ points.
By exploiting the sparsity of the transport map, multi-scale strategy \cite{2015arXiv150903668O} or  grid refinement \cite{Schmitzer2016} can deal with larger problems by solving iteratively sparse low dimensional problems with linear programming.  More efficient algorithms can be considered in the specific case of $1-$Wasserstein distances \cite{Li2018}.

\paragraph{Approximation of Wasserstein distance} For large scale problems,  approximated sparse transport matrices and  Wasserstein distances  can be obtained by considering successive one dimensional problems with the  so-called sliced Wasserstein distance \cite{Rabin11,Bonneel2015}.
The  entropic regularization of the transport map proposed in \cite{cuturi13} is another relevant way to deal with problems of high dimension. Given a regularization parameter $\gamma>0$,  it approximates the true Wasserstein distance (that corresponds in this setting to $\gamma=0$) with the well-known Sinkhorn algorithm. The estimated transport matrices are nevertheless dense and should be truncated with care for interpolation purposes.
When data are discretized on an uniform grid, the estimation of the distance can  be obtained through iterative convolutions \cite{2015-solomon-siggraph} with a kernel $K_p=\exp(-||x-y||^p/\gamma)$.
This leads to very fast algorithm for the $p-$Wasserstein distance as the Kernel is separable when considering the $L^p$ norm $||.||_p$, and only convolutions and storage of $d$ one-dimensional   kernels are necessary. For small values of  $\gamma$, numerical instabilities nevertheless arise and dedicated attention must be given to the implementation   by considering for instance 
decaying values of $\gamma$, stabilization in log domain or multi-resolution approaches \cite{Schmitzer16}. 
The numerical convergence is also reduced with low values of $\gamma$, and overrelaxation \cite{Schmitz,Thibault} or greedy coordinate descents \cite{greenkhorn}  approaches have been proposed to tackle this issue.
When the kernel $K_p$ is not separable, it must be carefully truncated to zero to store sparse matrices. If not considering more complex and adaptive truncations \cite{Schmitzer16}, it limits the possible amplitude of the transport, which may be annoying in case of large displacements.
Other regularizations can then be of interest \cite{JMLRrot,2017arXiv171006276B,2017arXiv171102283S}, namely  when  it is suitable to recover sparse transport matrices.

However, as mentioned before, this kind of techniques only leads to efficient implementations for   densities discretized on structured grids. Moreover, if large displacements are involved in the data, the Wasserstein distance can not be accurately approximated, since numerical instabilities  arises with $||x-y||^p/\gamma$ when $\gamma$ goes to $0$.

\paragraph{Wasserstein barycenters and Transshipment} The $p-$Wasserstein barycenter, as introduced in \cite{agueh2011barycenters},  
can be used to approximate the $p-$Wasserstein distance between measures involving a large number $n$ of dirac masses.  
As proposed  in \cite{Ye2017} for clustering problems, the discrete barycenter between $2$ discrete measures can be parameterized with  a weighted sum of $\kappa<<n$ diracs to obtain a low  dimensional problem that corresponds to the transshipment problem of $n$ resources with $\kappa$ intermediate locations. The sum of the distances between each data and the barycenter then gives an approximation of the effective distance, as it has been proposed for $1-$Wasserstein distances \cite{2018arXiv180507416A}. Statistical properties of such a method have been later studied in \cite{Weed,2019arXiv190108949P}, where it has been underlined the robustness of this low rank regularization of the transport matrix to data outliers.  These ideas  have been extended   to discrete  approximation of barycenter between continuous measures  in \cite{2018arXiv180205757C}. 

\paragraph{Content}The use of low dimensional barycenters is the point of view adopted in this note to propose fast approximation of Wasserstein distances involving sparse transport matrices. 
As in \cite{Ye2017}, at each iteration of the presented algorithm, the $\kappa$ locations  of the barycenter are updated and  a linear program of dimension $2\kappa n$ is solved. Compared to the dimension $n^2$ for the classical Wasserstein distance, the overall complexity of the algorithm becomes attractive for high dimensional data.
In \cite{Ye2017} and  \cite{Weed,2019arXiv190108949P}, this low dimensional barycenter problem is respectively solved with the Alternating Direction Method of Multipliers  or Sinkhorn iterations \cite{CuD14}. 
The barycenter problem between two discrete densities is here seen as a transshipment problem and solved up to numerical accuracy with an efficient network simplex graph algorithm based on the work of \cite{Bonneel}. Then $\kappa$ transportation subproblems are solved in parallel to recover a sparse transport matrix.
Following \cite{Schmitzer16,2015arXiv150903668O,2018arXiv181000118L}, this leads to the design of a  multi-scale barycenter scheme, to iteratively refine the transport matrix and the associated approximation of the Wasserstein distance. 

The theoretical computer science community has recently provided improved bounds  for graph based algorithms solving approximate transportation problems up to $\epsilon$ additive or $1+\epsilon$ multiplicative errors. This namely includes the optimal transportation problem \cite{2018arXiv181007717B}, its entropic regularization  \cite{2018arXiv181010046A} or the transshipment problem \cite{2016arXiv160705127B}. As for the sliced method \cite{Rabin11}, the presented approach does not have such theoretical guarantees but very good performances are observed  in practice.
More precisely, approximate $p-$Wasserstein distances between cloud points of $10^5$ elements are obtained in a few minutes without involving prohibitive memory storage  issues. The proposed empirical algorithm can also be directly applied to non structured data. Thanks to the multi-scale refinement approach, a sparse transport map is  provided which can be of interest for interpolation purposes.

\newpage

\paragraph{Outline}
Section 2 details how to approximate  the Wasserstein distance when computing the barycenter between two discrete measures.
The strategy of \cite{Ye2017} is recalled in the general context of $p-$Wasserstein distance.
A multi-scale algorithm for recovering sparse transport matrices is finally presented.  The  performances of the algorithm are discussed in section 3 through extensive experiments realized on the  benchmark \cite{bench}. Numerical results show that this whole empirical process is efficient, namely when one of the two input data is spatially regular.

\section{Approximate Wasserstein distance from barycenter estimation}
Let  $\mu_x$ and $\mu_y$ be two  discrete measures defined on $\Omega\subset \R^d$, $d\geq 1$: $\mu_x=\sum_{i=1}^mw^x_i\delta_{x_i}$ and $\mu_y=\sum_{j=1}^nw^y_j\delta_{y_j}$.
 These measures are supported at positions $\{x_i\}_{i=1}^m$ and $\{y_j\}_{j=1}^n$, $x_i,y_j\in \Omega\subset \R^d$. They have normalized positive weights vectors $w^x\in \mathcal{S}_m$ and $w^y\in \mathcal{S}_n$, where $\mathcal{S}_n$ is  the simplex of size $n$ defined as $\mathcal{S}_n=\{w\in \R^n_+,\textrm{ s.t } \sum_{i=1}^n w_i=1\}.$ 
For $p\geq 1$, let $c^{xy}\in\R_+^{m\times n}$ be the ground cost matrix over  $\Omega$ defined as $c^{xy}_{ij}=\|x_i-y_j\|^p
$, which corresponds to the power $p$ of the distance related to a given  norm on $\R^d$. 
Then the $p-$Wasserstein distance between discrete measures $\mu_x$ and $\mu_y$ is 
\begin{equation}\label{def:Wp_discrete}W^p_p(\mu_x,\mu_y)=\min_{\gamma\in\mathcal{P}(w^x,w^y) } \langle \gamma,c^{xy}\rangle:=\sum_{ij}\gamma_{ij}c^{xy}_{ij},\end{equation}
with the set of admissible transport matrices 
\begin{equation}\label{admiss}\mathcal{P}(w^x,w^y)=\{\gamma\in\R_+^{m\times n},\, \textrm{ s.t }\gamma1_n=w^x,\gamma^\top 1_m=w^y\},\end{equation}
and where $1_n$ the  vector full of ones in $\R^n$.
This problem can be efficiently solved with  linear programming. It can also been formulated through a directed graph containing $m+n$ nodes and $mn$ vertices. The final transport matrix $\gamma$ is very sparse in practice (at most $m+n-1$ non null entries) but the involved complexity and memory storage scale with the product of data dimensions $nm$. For latter purpose, Algorithm \ref{algoW} details the function estimating the distance $W_p^p(\mu_x,\mu_y)$.

\begin{algorithm}[ht!]
\caption{Estimate $p$-Wasserstein distance $W_p^p(\mu_x,\mu_y)$}
\label{algoW}
\begin{algorithmic}[1]
\Procedure{Wp}{$x$, $w^x$, $y$, $w^y$, $p$}
\State Set $c^{xy}_{ij}=\norm{x_i-y_j}^p$
\State Solve problem \eqref{def:Wp_discrete} under the constraints \eqref{admiss} to get $\gamma$
\State Set $W=\langle \gamma,c^{xy}\rangle$
\State \Return$W$, $\gamma$
\EndProcedure
\end{algorithmic}
\end{algorithm}

\subsection{Interpolation and barycenters}
Let $\gamma^{xy}$ be an optimal transport matrix  solution of \eqref{def:Wp_discrete}
and $\mu_t$ be the following interpolation between measures $\mu_x$ and $\mu_y$ for $t\in[0;1]$ :

\begin{equation}\label{def:interp}
\mu_t=\sum_{ij}\gamma^{xy}_{ij}\delta_{x_i+t(y_j-x_i)}:=\sum_{k}w^t_k\delta_{z_k}.
\end{equation}
This interpolation is the discrete analogue \cite{2018arXiv180300567P} to the geodesic between $\mu_x$ and $\mu_y$ defined by the McCann’s interpolation \cite{MCCANN1997153}.
Discrete versions of some results in  \cite{santambrogio2015optimal} can now be expressed.

\begin{proposition}For $p\geq 1$, $\gamma^{xy}$ a solution of \eqref{def:Wp_discrete}, $\mu_t$ defined in \eqref{def:interp} and $t\in[0;1]$, the following relations hold:
\begin{equation}\label{constant_speed}W_p(\mu_x,\mu_t)=tW_p(\mu_x,\mu_y)\hspace{1cm}W_p(\mu_y,\mu_t)=(1-t)W_p(\mu_x,\mu_y),\end{equation} so that
\begin{equation}\label{pb:bar}W_p(\mu_x,\mu_y)=W_p(\mu_x,\mu_t)+W_p(\mu_y,\mu_t).
\end{equation}
\end{proposition}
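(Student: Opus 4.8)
The plan is to prove the two constant-speed identities by a sandwiching argument: first exhibit explicit competitor plans giving the upper bounds $W_p(\mu_x,\mu_t)\le t\,W_p(\mu_x,\mu_y)$ and $W_p(\mu_y,\mu_t)\le(1-t)\,W_p(\mu_x,\mu_y)$, then use the triangle inequality for $W_p$ to force all of these to be equalities, which simultaneously yields \eqref{constant_speed} and \eqref{pb:bar}.

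First I would build a transport plan between $\mu_x$ and $\mu_t$. Writing $z_k$ for the support points of $\mu_t$ as in \eqref{def:interp}, set $\pi_{ik}=\sum_{j\,:\,x_i+t(y_j-x_i)=z_k}\gamma^{xy}_{ij}$; this aggregation over index pairs is needed because the map $(i,j)\mapsto x_i+t(y_j-x_i)$ need not be injective. A direct check shows $\pi 1=w^x$ and $\pi^\top 1=w^t$, so $\pi\in\mathcal{P}(w^x,w^t)$, and its cost is $\sum_{ik}\pi_{ik}\norm{x_i-z_k}^p=\sum_{ij}\gamma^{xy}_{ij}\,t^p\norm{y_j-x_i}^p=t^p\,W_p^p(\mu_x,\mu_y)$, using $z_k-x_i=t(y_j-x_i)$ and positive homogeneity of the norm. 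Taking $p$-th roots gives the first upper bound; the same construction with $1-t$ in place of $t$, transporting $\mu_t$ to $\mu_y$, gives $W_p(\mu_y,\mu_t)\le(1-t)\,W_p(\mu_x,\mu_y)$.

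Next I would invoke that $W_p$ is a metric on discrete measures for $p\ge 1$ (the discrete analogue of the results in \cite{santambrogio2015optimal}), in particular the triangle inequality $W_p(\mu_x,\mu_y)\le W_p(\mu_x,\mu_t)+W_p(\mu_t,\mu_y)$. Adding the two upper bounds from the previous step gives $W_p(\mu_x,\mu_t)+W_p(\mu_y,\mu_t)\le t\,W_p(\mu_x,\mu_y)+(1-t)\,W_p(\mu_x,\mu_y)=W_p(\mu_x,\mu_y)$. Combined with the triangle inequality, this chain of inequalities must be an equality throughout, which is exactly \eqref{pb:bar}; and since $W_p(\mu_x,\mu_t)\le t\,W_p(\mu_x,\mu_y)$ and $W_p(\mu_y,\mu_t)\le(1-t)\,W_p(\mu_x,\mu_y)$ with the two right-hand sides summing to $W_p(\mu_x,\mu_y)$, neither inequality can be strict, giving \eqref{constant_speed}.

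The only genuinely delicate point is the bookkeeping around the support of $\mu_t$: the competitor plan must be defined on the actual (possibly merged) support points $z_k$ rather than on index pairs $(i,j)$, and the degenerate cases $t\in\{0,1\}$ should be noted separately; once the marginals of $\pi$ are verified with this convention, the rest is routine. Note also that optimality of $\gamma^{xy}$ for \eqref{def:Wp_discrete} is used only to identify $\sum_{ij}\gamma^{xy}_{ij}\norm{x_i-y_j}^p$ with $W_p^p(\mu_x,\mu_y)$; the construction of the competitor plans itself works for any admissible $\gamma^{xy}$.
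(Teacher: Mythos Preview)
Your proof is correct and follows essentially the same approach as the paper: upper bounds via competitor plans derived from $\gamma^{xy}$, then the triangle inequality for $W_p$ to force equality. The paper's version is terser (it writes the competitor bound in one line without the explicit aggregation over merged support points), but your more careful bookkeeping around the support of $\mu_t$ is a welcome clarification rather than a different argument.
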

\begin{proof}
Observing that $$W^p_p(\mu_x,\mu_t)=\min_{\gamma\in\mathcal{P}(w^x,w^t) }\sum_{ik}\gamma_{ik}\|x_i-z_k\|^p\leq \sum_{ij}\gamma^{xy}_{ij}\|t(x_i-y_j)\|^p,$$
leads to the upper bound  $W_p(\mu_x,\mu_t)\leq t W_p(\mu_x,\mu_y)$,   $\forall t\in[0;1]$. It can be shown  in  the same way  that  $W_p(\mu_y,\mu_t)\leq (1-t) W_p(\mu_x,\mu_y)$. 
Since $W_p$ is a distance, the triangle inequality  $W_p(\mu_x,\mu_y)\leq W_p(\mu_x,\mu_t)+W_p(\mu_t,\mu_y)$ involves that the previous relations are in fact equalities.
\end{proof}

\vspace*{0.4cm}

Following \cite{agueh2011barycenters}, it can be shown that the mid interpolation  $\mu_{1/2}$ is solution of the p-Wasserstein barycenter problem   between $\mu_x$ and $\mu_y$ with weights $(1/2,1/2)$.

 \begin{proposition}\label{prop:bar}
For $p\geq 1$  and the  interpolation $\mu_{1/2}$ defined in \eqref{def:interp},
it holds that
\begin{equation}\label{min:bary2}\frac12W^p_p(\mu_x,\mu_{1/2})+ \frac12W^p_p(\mu_y,\mu_{1/2})=\left(\frac12 W_p(\mu_x,\mu_y)\right)^p\end{equation}
and  $\mu_{1/2}$
 is a solution of the $p-$Wasserstein barycenter problem:
\begin{equation}\label{pb:bary2}\mu_{1/2}\in \uargmin{\mu} \frac12W^p_p(\mu_x,\mu)+ \frac12W^p_p(\mu_y,\mu).
\end{equation}
\end{proposition}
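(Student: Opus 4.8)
The plan is to get \eqref{min:bary2} as a direct specialization of the constant-speed identities \eqref{constant_speed} to $t=1/2$, and then to deduce the variational characterization \eqref{pb:bary2} by combining the triangle inequality for $W_p$ with the convexity of $r\mapsto r^p$.

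First I would set $t=1/2$ in \eqref{constant_speed}, which yields $W_p(\mu_x,\mu_{1/2})=W_p(\mu_y,\mu_{1/2})=\tfrac12 W_p(\mu_x,\mu_y)$. Raising to the power $p$ and averaging these two equal quantities gives
\[
\tfrac12 W_p^p(\mu_x,\mu_{1/2})+\tfrac12 W_p^p(\mu_y,\mu_{1/2})=\left(\tfrac12 W_p(\mu_x,\mu_y)\right)^p,
\]
which is \eqref{min:bary2}, and which also identifies the value of the objective in \eqref{pb:bary2} at the candidate minimizer $\mu_{1/2}$.

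Next, for an arbitrary admissible measure $\mu$, set $a=W_p(\mu_x,\mu)$ and $b=W_p(\mu_y,\mu)$. The triangle inequality for the distance $W_p$ gives $W_p(\mu_x,\mu_y)\le a+b$, hence $\left(\tfrac12 W_p(\mu_x,\mu_y)\right)^p\le\left(\tfrac{a+b}{2}\right)^p$; and since $p\ge 1$ the map $r\mapsto r^p$ is convex on $\R_+$, so $\left(\tfrac{a+b}{2}\right)^p\le\tfrac12 a^p+\tfrac12 b^p$. Chaining the two inequalities shows $\tfrac12 W_p^p(\mu_x,\mu)+\tfrac12 W_p^p(\mu_y,\mu)\ge\left(\tfrac12 W_p(\mu_x,\mu_y)\right)^p$ for every $\mu$, with equality attained at $\mu=\mu_{1/2}$ by the identity just established; therefore $\mu_{1/2}$ solves \eqref{pb:bary2}.

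The argument is essentially routine once \eqref{constant_speed} is in hand; the only point requiring a little care is the direction of the two inequalities, checking that the triangle inequality and the convexity step both push the functional \emph{above} the target value rather than below. Note also that this lower bound holds for any probability measure $\mu$ with finite $p$-th moment, so the precise class over which the $\argmin$ in \eqref{pb:bary2} is taken does not affect the conclusion.
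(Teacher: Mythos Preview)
Your proof is correct and follows essentially the same route as the paper: specialize \eqref{constant_speed} at $t=1/2$ to obtain \eqref{min:bary2}, then combine the triangle inequality for $W_p$ with the convexity of $r\mapsto r^p$ to lower-bound the barycenter functional by $\left(\tfrac12 W_p(\mu_x,\mu_y)\right)^p$ for every $\mu$. The only cosmetic difference is that you name $a,b$ explicitly, but the chain of inequalities is identical to the paper's \eqref{relation1}--\eqref{relations2}.
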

\begin{proof}
From \eqref{constant_speed}, it can first be noticed that: 
\begin{equation}\label{relation1}
\begin{split}
\frac12\left(W^p_p(\mu_x,\mu_{1/2})+W^p_p (\mu_y,\mu_{1/2})\right)=\frac12\left(2 \left(\frac12 W_p(\mu_x,\mu_y)\right)^p  \right)=\left(\frac12 W_p(\mu_x,\mu_y)\right)^p.
\end{split}
\end{equation}
Since  $W_p$ is a distance and  the function $|.|^p$, is convex for $p\geq 1$, it can next be observed that  $\forall\mu$:
\begin{equation}\label{relations2}\left(\frac12 W_p(\mu_x,\mu_y)\right)^p\leq \left(\frac12(W_p(\mu_x, \mu)+W_p(\mu_y, \mu))\right)^p\leq \frac12(W^p_p(\mu_x, \mu)+  W^p_p(\mu_y,\mu)),\end{equation}
Combing relations  \eqref{relation1} and \eqref{relations2} implies that $\mu_{1/2}$ is a solution of the barycenter problem \eqref{pb:bary2}.
\end{proof}
\vspace*{0.5cm}

Existence (and uniqueness  for $p> 1$) of Wasserstein  barycenters have been deeply studied in \cite{agueh2011barycenters,gouic2015existence}. 
As stated in the following proposition, the $p-$Wasserstein distance can be obtained  through the resolution of the barycenter problem \eqref{pb:bary2}.

\begin{proposition}
Let $\tilde \mu$ be a solution of the $p-$Wasserstein barycenter problem
\eqref{pb:bary2}, then  
\begin{equation}\label{rel:bar}
W_p(\mu_x,\mu_y)=W_p(\mu_x,\tilde  \mu)+W_p(\mu_y,\tilde \mu)
\end{equation}
and $W_p(\mu_x,\tilde  \mu)=W_p(\mu_y,\tilde \mu)$ for $p>1$.
\end{proposition}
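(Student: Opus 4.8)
The plan is to compare an arbitrary barycenter $\tilde\mu$ with the explicit barycenter $\mu_{1/2}$ of Proposition~\ref{prop:bar}, and then to exploit the strict monotonicity and strict convexity of $t\mapsto t^p$ to turn the chain of inequalities \eqref{relations2} into equalities.

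First I would recall from Proposition~\ref{prop:bar} that $\mu_{1/2}$ is a minimizer of \eqref{pb:bary2}, and that by \eqref{min:bary2} the minimal value of \eqref{pb:bary2} equals $\left(\tfrac12 W_p(\mu_x,\mu_y)\right)^p$. Hence, since $\tilde\mu$ is also a minimizer of \eqref{pb:bary2}, it attains this same value:
\begin{equation*}
\tfrac12 W^p_p(\mu_x,\tilde\mu)+\tfrac12 W^p_p(\mu_y,\tilde\mu)=\left(\tfrac12 W_p(\mu_x,\mu_y)\right)^p .
\end{equation*}

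Next I would apply \eqref{relations2} with $\mu=\tilde\mu$, which sandwiches this common value between $\left(\tfrac12 W_p(\mu_x,\mu_y)\right)^p$ on the left and $\tfrac12\bigl(W^p_p(\mu_x,\tilde\mu)+W^p_p(\mu_y,\tilde\mu)\bigr)$ on the right. Since the two extreme terms coincide, both inequalities in \eqref{relations2} are in fact equalities. Equality in the first one reads $\left(\tfrac12 W_p(\mu_x,\mu_y)\right)^p=\left(\tfrac12(W_p(\mu_x,\tilde\mu)+W_p(\mu_y,\tilde\mu))\right)^p$; as $t\mapsto t^p$ is strictly increasing on $\R_+$ for $p\ge 1$, this gives $W_p(\mu_x,\mu_y)=W_p(\mu_x,\tilde\mu)+W_p(\mu_y,\tilde\mu)$, which is \eqref{rel:bar}. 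Equality in the second one is equality in the convexity inequality $\left(\tfrac{a+b}{2}\right)^p\le \tfrac12(a^p+b^p)$ with $a=W_p(\mu_x,\tilde\mu)$ and $b=W_p(\mu_y,\tilde\mu)$; for $p>1$ the map $t\mapsto t^p$ is strictly convex, so equality forces $a=b$, i.e. $W_p(\mu_x,\tilde\mu)=W_p(\mu_y,\tilde\mu)$.

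There is essentially no hard step here: the only point requiring care is that the optimal value of the barycenter problem is known \emph{exactly} (via $\mu_{1/2}$ and \eqref{min:bary2}), so that the sandwich produced by \eqref{relations2} collapses; the remainder is the elementary fact that $t\mapsto t^p$ is strictly increasing for $p\ge 1$ and strictly convex for $p>1$. One may also note that the existence of a minimizer $\tilde\mu$ is guaranteed by \cite{agueh2011barycenters,gouic2015existence}, so the statement is not vacuous, and that for $p=1$ the second conclusion genuinely fails in general, which is why it is stated only for $p>1$.
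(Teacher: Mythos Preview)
Your proof is correct and follows essentially the same approach as the paper's: both rely on the fact that the optimal value of \eqref{pb:bary2} is exactly $\left(\tfrac12 W_p(\mu_x,\mu_y)\right)^p$ (from Proposition~\ref{prop:bar}), then force equality throughout the chain \eqref{relations2} and invoke strict monotonicity and strict convexity of $t\mapsto t^p$. The only cosmetic difference is that the paper phrases the first part as a contradiction and, for the second part, introduces a parameter $\alpha=W_p(\mu_y,\tilde\mu)/W_p(\mu_x,\tilde\mu)$ to arrive at $((1+\alpha)/2)^p=(1+\alpha^p)/2$, whereas you read off both equalities directly from the collapsed sandwich; the underlying argument is the same.
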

\begin{proof}
First assume that \eqref{rel:bar} is not satisfied then we get a contradiction since 

\begin{equation}\label{relations2b}\left(\frac12 W_p(\mu_x,\mu_y)\right)^p\hspace{-0.1cm}< \left(\frac12(W_p(\mu_x,\tilde \mu)+W_p(\mu_y,\tilde \mu))\right)^p\hspace{-0.1cm}\leq \frac12(W^p_p(\mu_x,\tilde \mu)+  W^p_p(\mu_y,\tilde \mu))=\left(\frac12 W_p(\mu_x,\mu_y)\right)^p\hspace{-0.1cm},\end{equation}
where the last equality comes from Proposition  \ref{prop:bar} and the fact that   $\tilde \mu$ is a solution of \eqref{pb:bary2}.
Without loss of generality,  assume that $W_p(\mu_y,\tilde \mu)=\alpha W_p(\mu_x,\tilde \mu)$, 
with $\alpha\in [0;1]$. From Proposition  \ref{prop:bar}, it holds  that  
 $2^{p-1}(1+\alpha^p)W^p_p(\mu_x,\tilde \mu)=W^p_p(\mu_x,\mu_y)$. 
Relation \eqref{rel:bar} then gives  $W_p(\mu_x,\tilde  \mu)+W_p(\mu_y,\tilde \mu)=(1+\alpha) W_p(\mu_x,\tilde \mu)=W_p(\mu_x,\mu_y)$, which leads to  $((1+\alpha)/2)^p=(1+\alpha^p)/2$. As the function $|x|^p$ is strictly convex for $p>1$, we get $\alpha=1$ as soon as $p>1$.
\end{proof}

\newpage

\subsection{Barycenter computation}
A discrete barycenter $\mu_{1/2}$ between $\mu_x$ and $\mu_y$ can be obtained by solving \eqref{pb:bary2} with the distance \eqref{def:Wp_discrete}. As done  in \cite{CuD14} with an additional entropic regularization, the mid barycenter $\mu_{1/2}$ between $\mu_x$ and $\mu_y$ can be constrained to be supported on a set of $\kappa$ dirac masses, i.e. $\mu_{1/2}= \sum_{k=1}^\kappa w^z_k\delta_{z_k}$, with positions $z_k\in \R^d$ and  weights $w^z\in \mathcal{S}_\kappa$ . The barycenter problem can be rewritten as:
\begin{equation}\label{pb:bar_reg}
\begin{split}&\min_{\begin{array}{c}\{z_k\}_{k=1}^\kappa\in \Omega^\kappa\\w^z\in \mathcal{S}_\kappa
\end{array}}\hspace{-0.2cm} W^p_p\left(\mu_x,\mu_z\right)+W^p_p \left(\mu_y,\mu_z\right)=\hspace{-0.2cm} \min_{\begin{array}{c}\{z_k\}_{k=1}^\kappa\in \Omega^\kappa\\
(\gamma^x,\gamma^y)\in \mathcal{\tilde  P}(w^x,w^y)\end{array}}\hspace{-0.4cm} \langle \gamma^x,c^{xz}\rangle+ \langle \gamma^y,c^{yz}\rangle,\end{split}\end{equation}
with the admissible set of matrices
$$\mathcal{\tilde  P}(w^x,w^y)=\{\gamma^x\in\R_+^{m\times \kappa} ,\, \gamma^y\in\R_+^{n\times \kappa},\textrm{ s.t. } \gamma^x1_\kappa=w^x, \gamma^y1_\kappa=w^y, (\gamma^x)^\top1_m-(\gamma^y)^\top1_n=0_\kappa\}$$
 and the cost matrices 
$c^{xz}_{ik}=\|x_i-z_k\|^p$ 
 and $c^{yz}_{jk}=\|y_j-z_k\|^p$. 
Notice that the weight vector of the barycenter  $w^z\in\R^\kappa$ is implicitly included in the set of constraints: $(\gamma^x)^\top1_m=(\gamma^y)^\top1_n=w^z$. The positions $z_k$ here act as intermediate locations where the mass has to transit from $x$ to $y$.
Following \cite{Ye2017} and as illustrated in Figure \ref{fig:ts},  the idea behind this modeling  is to consider a limited number of transshipment locations $\kappa$ to speed up the computation.  \\

\begin{figure}[ht!]
\begin{center}
\begin{tabular}{cc}
\includegraphics[width=0.32\linewidth]{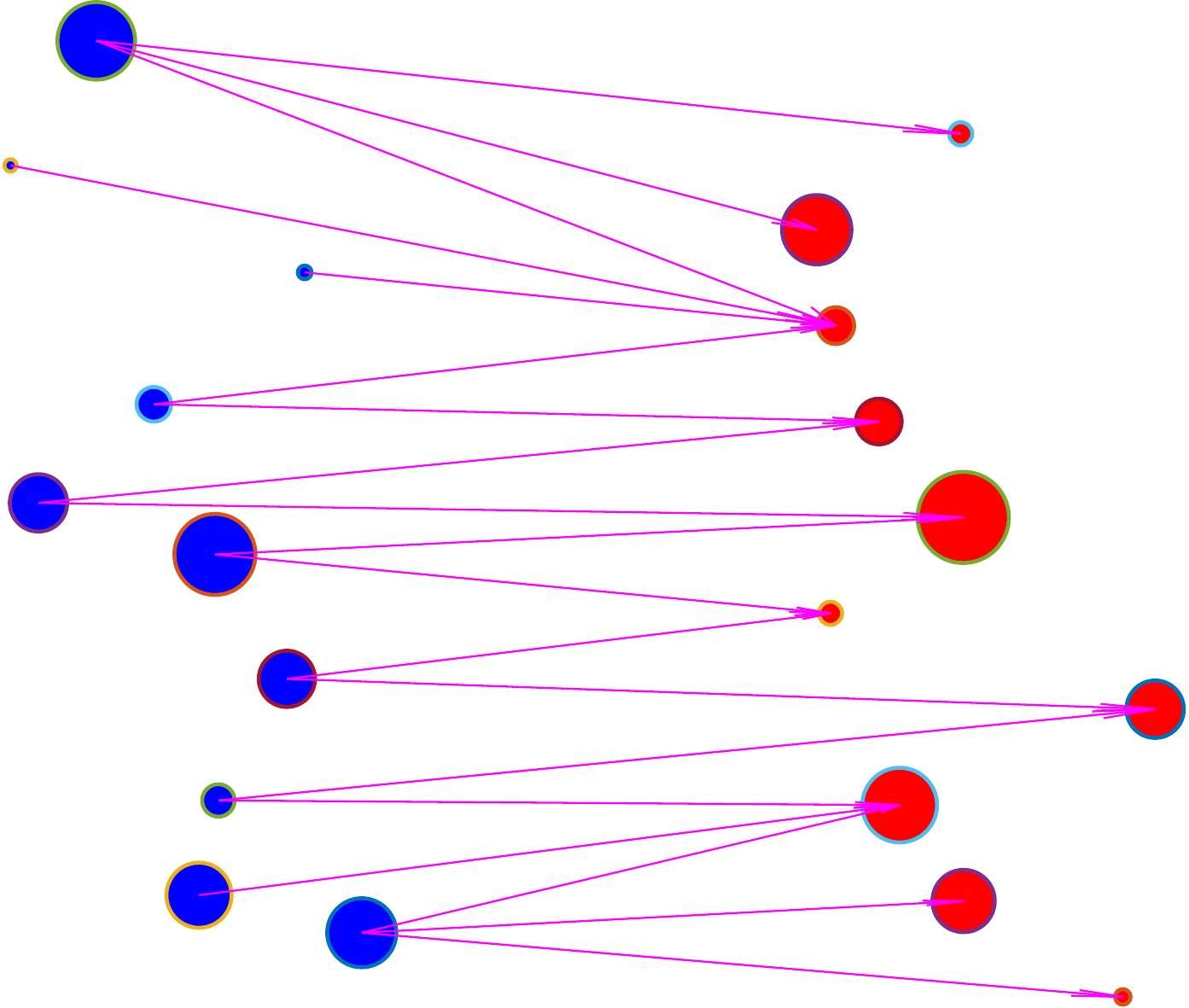}&\includegraphics[width=0.32\linewidth]{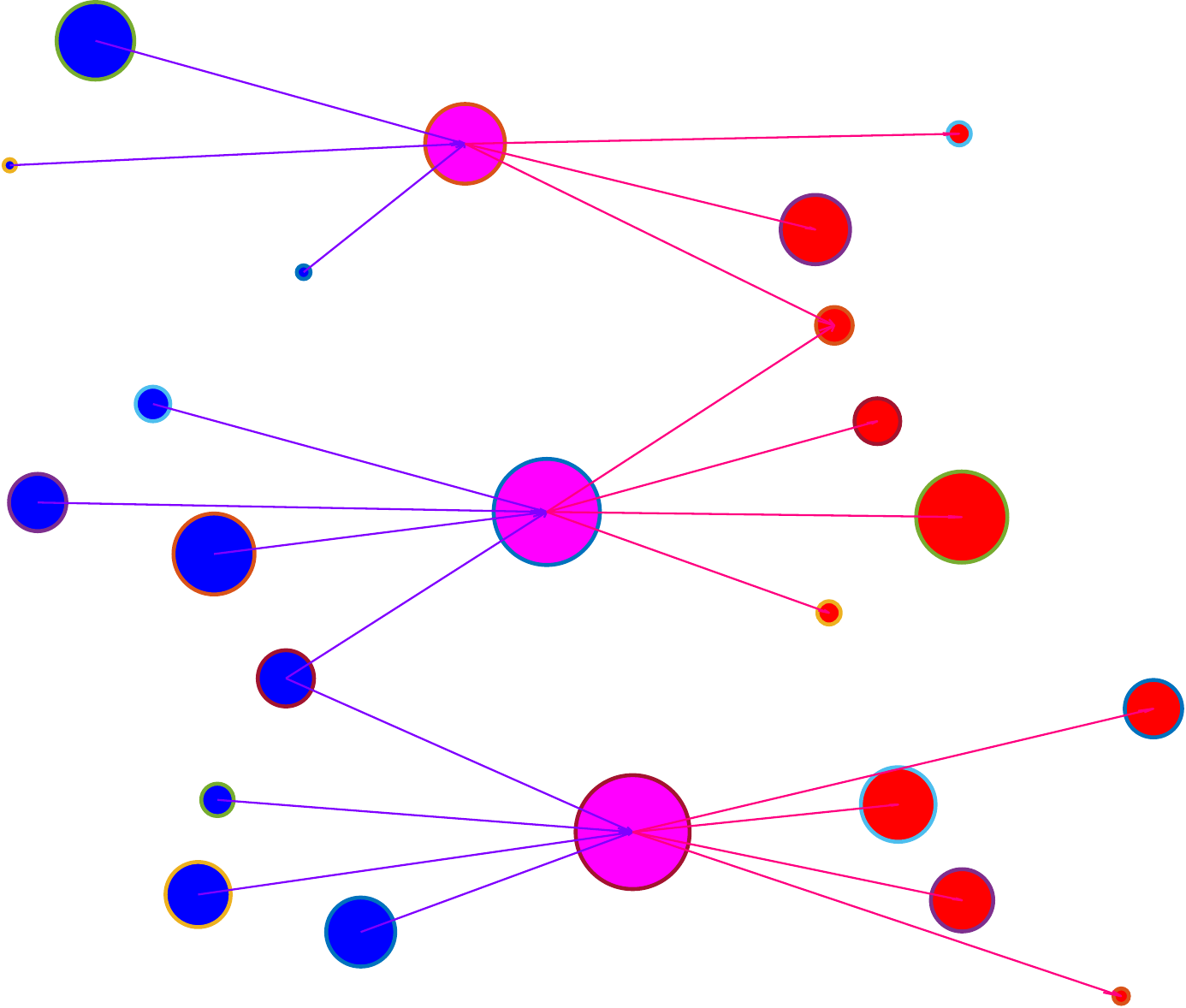}\\
Optimal Transportation problem&Optimal Transshipment problem
\end{tabular}
\caption{\label{fig:ts} Illustration of transshipment with $\kappa=3$ intermediate locations.}
\end{center}
\end{figure}

The problem \eqref{pb:bar_reg} is separately convex with respect its variables $\gamma^x$, $\gamma^y$ and  $z_k$.  For $p>1$,  the coupling terms are differentiable and  alternate minimization over transport matrices $(\gamma^x,\gamma^y)$  and  dirac positions $z_k$ converges  \cite{Tseng} to a saddle point.
Following \eqref{rel:bar}, the Wasserstein distance $W_p(\mu_x,\mu_y)$ can  be approximated with $\tilde W_p(\mu_x,\mu_y)$ obtained from transport  matrices $\gamma^x$ and $\gamma^y$ solutions of the problem \eqref{pb:bar_reg} as
$$W_p(\mu_x,\mu_y)\leq\tilde W_p(\mu_x,\mu_y)= W_p(\mu_x,\mu_{1/2})+W_p(\mu_y,\mu_{1/2})= \langle \gamma^x,c^{xz}\rangle^{1/p}+ \langle \gamma^y ,c^{yz}\rangle^{1/p}.$$
The alternate optimization steps for solving  \eqref{pb:bar_reg}  are now detailed and the process is summed up  in Algorithm \ref{algo_bar}.

\begin{remark}
The norm $\|.\|_2$ is usually taken as reference   for the $p-$Wasserstein distance. From the equivalence of norms in finite dimensions and as can be done with entropic regularization to make the problem more  tractable numerically,  the $L^p$ norm $\|x_i-y_j\|^p_p=\sum_{\dime=1}^d(x_i^\dime-y_j^\dime)^p$ is here considered for computing the $p-$Wasserstein distance. 
\end{remark}
\vspace*{-0.4cm}

\begin{algorithm}[ht!]
\caption{Estimate Approximate $p$-Wasserstein  $\tilde W_p^p(\mu_x,\mu_y)$ through transshipment}
\label{algo_bar}
\begin{algorithmic}[1]
\Procedure{BarWp}{$x$, $w^x$, $y$, $w^y$, $p$, $\kappa$}
\State $\epsilon=10^{-3}$
\State Initialize $\kappa$ positions $z_k$ randomly from the $m$ and $n$ positions $x_i$ and $y_j$
\Repeat
\State Set $c^{xz}_{ik}=\norm{x_i-z_k}^p$ and $c^{yz}_{jk}=\norm{y_j-z_k}^p$
\State Get $\gamma^x$ and $\gamma^y$ by solving  transshipment problem \eqref{pb:ts} under the constraints \eqref{admiss2}
\State Set $\tilde z_k=z_k$
\State Update positions $z_k$ from $\gamma^x$ and $\gamma^y$ by following Sec. \ref{sec:up_z}
\Until{$\norm{z_k-\tilde z_k}/\norm{\tilde z_k}<\epsilon$}
\State Set $\tilde W=(\langle c^{xz},\gamma^x\rangle^{1/p}+\langle c^{yz},\gamma^y\rangle^{1/p})^p$
\State  \Return $\tilde W$, $\gamma^x$, $\gamma^y$
\EndProcedure
\end{algorithmic}
\end{algorithm}

\subsubsection{Update of positions}\label{sec:up_z}
To  update  positions $z_k\in\R^d$, the problem  \eqref{pb:bar_reg} is solved for fixed transport matrices $\gamma^x$ and $\gamma^y$. For $p\geq 1$, this leads to $d$  convex problems, that can be solved in parallel for each space dimension $\dime$=1\dots d. Dimension indexes $\dime$ are thus omitted  in the following and the problem writes
\begin{equation}\label{pb:z}
\min_{\{z_k\}_{k=1}^\kappa} \sum_ {i=1}^M\sum_{k=1}^\kappa  \gamma^x_{ik} |x_i-z_k|^p+ \sum_ {j=1}^N\sum_{k=1}^\kappa  \gamma^y_{jk} |y_j-z_k|^p,
\end{equation}
Different strategies are considered according to $p$.
Some locations $z_k$ may become useless if $\sum_i\gamma^x_{ik}=\sum_j\gamma^y_{jk}=0$. In this case the corresponding $z_k$ are removed and $\kappa$ is decreased. When considering large scale problems and  few transshipment locations $\kappa$, this almost never happens.\vspace{-0.15cm}
\paragraph{Case $W_2$.}
For $p=2$, there exists  an explicit update formula of the dirac positions to find the unique  minimizer of \eqref{pb:z} with respect to $z$:
\begin{equation}\label{update_z}z_k=\frac{((\gamma^x)^\top x+(\gamma^y)^\top y)_k}{((\gamma^x)^\top \Un_m+(\gamma^y)^\top \Un_n)_k}.\end{equation}
This step acts like the cluster position update in a $\kappa$-mean algorithm. It realizes for each $z_k$ a weighted mean of the positions $x_i$'s and $y_j$'s according to $\gamma^x_{ik}$ and $\gamma^y_{jk}$.\vspace{-0.15cm}

\paragraph{Case $W_p$, $p>2$.}
For $p>2$ the problem \eqref{pb:z}  admits a unique minimizer and is twice differentiable. Newton's method can be considered to approximate the solution:
$$z_k ^{\ell+1}=z_k ^{\ell}- \frac{\left(\left(\gamma^x\otimes |c^{xz^\ell}|^{p-2} \otimes c^{xz^\ell}\right)^\top \Un_m+\left(\gamma^y\otimes |c^{yz^\ell}|^{p-2}\otimes c^{yz^\ell} \right)^\top \Un_n\right)_k}{(p-1)\left((\gamma^x\otimes |c^{xz^\ell}|^{p-2} )^\top \Un_m+(\gamma^y\otimes |c^{yz^\ell}|^{p-2} )^\top \Un_n\right)_k},$$
where $c^{xz^\ell}_{ik}=z^\ell_k-x_i$ and $c^{yz^\ell}_{jk}=z^\ell_k-y_j$, while $\otimes$ denotes the elementwise product between matrices and the power $p-2$ is also element-wise.\vspace{-0.15cm}
\paragraph{Case $W_1$.}
When $p=1$, a global optimum of \eqref{pb:z} can be obtained by taking each $z_k$ as a weighted median of the positions $x_i$ and $y_j$ with respect to the weights $\gamma^x_{ik}$ and $(\gamma^x)_{jk}$.  This operation mainly requires to sort the value of the $x_i$'s and $y_j$'s along each dimension. Notice that alternate minimization on problem \eqref{pb:bar_reg} may  not converge when $p=1$.\vspace{-0.15cm}

\paragraph{Case $W_p$, $p\in]1;2[$.} The problem does not admits a second derivative. An iterative  scheme is then considered by decomposing  $|z_k- x_i|^{p}=|z_k-x_i|^{p-1}|z_k- x_i|$ and solving successive weighted median problems between the positions $x_i$ and $y_j$ with the weights 
$\gamma^x_{ik}|z_k^\ell-x_i|^{p-1}$ and $\gamma^y_{jk}|z_k^\ell-y_j|^{p-1}$.\vspace{-0.14cm}

\subsubsection{Update of transport matrices through Transshipment}\label{sec:up_g}
For fixed $z_k$, problem \eqref{pb:bar_reg} can be solved with classic linear programming optimization tools.
The interesting point is that this barycenter problem is a transshipment problem with $\kappa$ intermediate locations. It can  therefore be formulated in terms of a directed graph with  $m+\kappa+n$ vertices  (i.e. $x_i$, $z_k$ and $y_j$) and  $(m \kappa +\kappa n)$ edges $e_{ik}$ and $e_{kj}$ that correspond to the transport matrices $\gamma^x_{ik}$ and $\gamma^y_{jk}$.
The following cost function is then minimized
\begin{equation}\label{pb:ts}(\gamma^x,(\gamma^y)^\top)\in \uargmin{e=(\{e_{ik}\}, \{e_{kj}\})}\sum_{i=1}^m\sum_{k=1}^\kappa e_{ik}c^{xz}_{ik}+\sum_{k=1}^\kappa\sum_{j=1}^n e_{kj}c^{yz}_{jk},\end{equation}
under the set of constraint $\mathcal{\tilde  P}(w^x,w^y)$ that translates into:
\begin{equation}\label{admiss2}
\left\{\begin{array}{rll}
e_{ik}, e_{kj}&\geq 0&i=1\cdots m,\, k=1\cdots \kappa,\, j=1\cdots n\\
\sum_{k=1}^\kappa e_{ik}&=w^x_i&i=1\cdots m\\
\sum_{j=1}^n e_{kj}-\sum_{i=1}^n e_{ik}&=0&k=1\cdots \kappa\\
-\sum_{j=1}^n e_{kj}&=-w^y_j&j=1\cdots n\\
\end{array}\right.
\end{equation}
This problem can be efficiently solved with the network simplex algorithm \cite{Orlin1993}. An extension of the original non sparse implementation proposed in \cite{Bonneel} is here considered.

\subsubsection{Discussion}

It is well known  \cite{agueh2011barycenters,2015arXiv150707218A} that if $\mu_x$ and $\mu_y$ are respectively supported by $m$ and $n$ dirac masses, then their exists a barycenter  supported by up to $m+n+1$ dirac masses. Hence, the approximation of the barycenter from a set of $\kappa<\min(m,n)$ dirac masses seems to be a bad choice at first sight.
Notice however that,  as pointed out in \cite{Weed}, a barycenter supported by a low dimensional space allows the approximate distance  to be more robust to data outliers.
Next, contrary to entropic regularization of OT, the obtained approximate transport map is here sparse, which allows efficient storage and can be directly used for interpolation purposes without any complex post-processing like sharpening.

\begin{wrapfigure}[20]{r}{7.7cm}%
\begin{center}\vspace{-0.75cm}
\includegraphics[width=\linewidth]{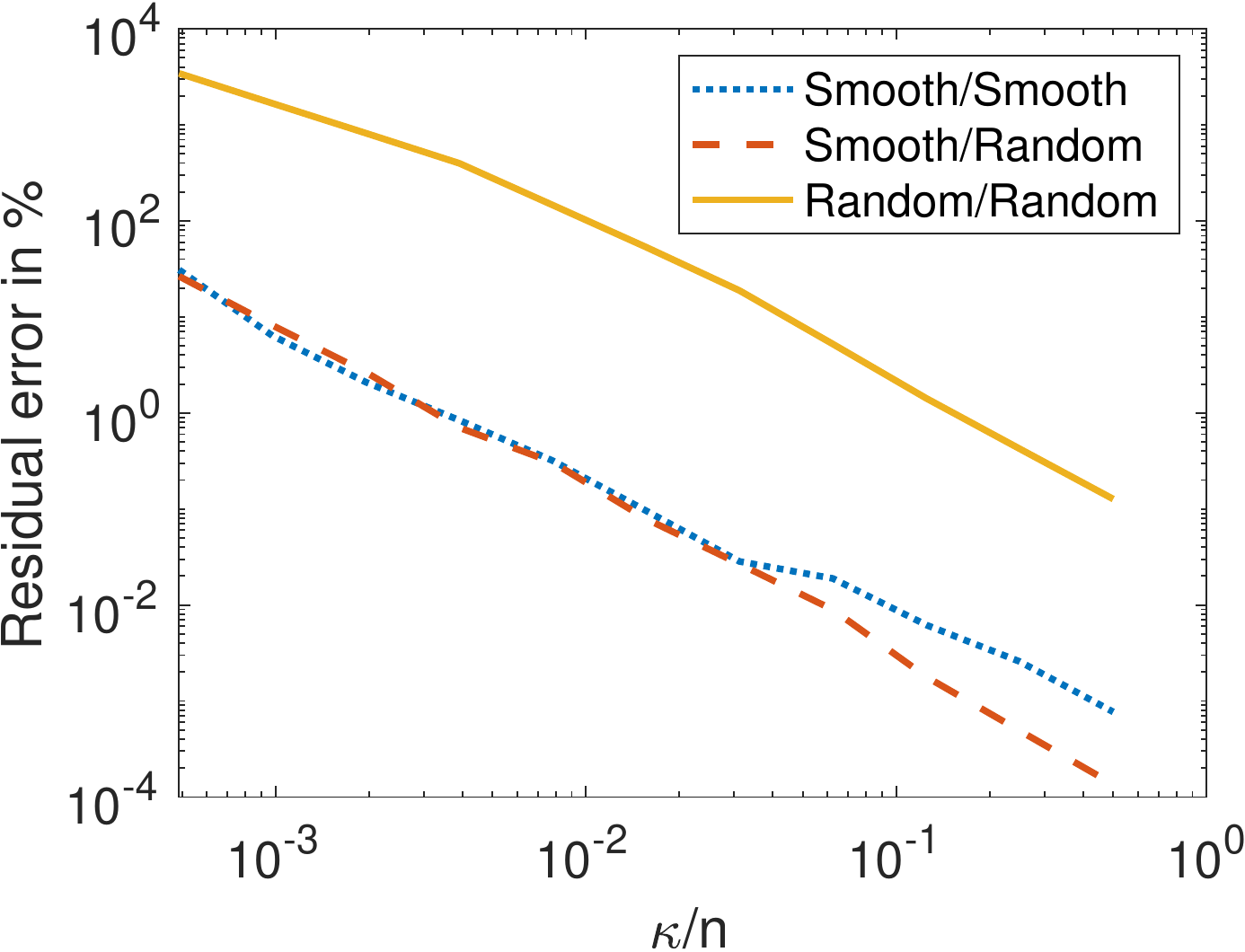}\vspace{-0.5cm}
\caption{\label{fig:cv_data} Accuracy of the approximate Wasserstein distance estimated by solving the transshipment problem \eqref{pb:bar_reg} for increasing values of $\kappa$. Scenarios involving either smooth and/or random data of dimension $m=n=10^5$ have been considered. }
\end{center}
\end{wrapfigure}
Finally, it is worth noting that the approximation depends on the regularity of the data.
As underlined in  \cite{agueh2011barycenters}, when computing the barycenter of  a set of densities $\mu_i$ in the continuous case,  if one of the input data $\mu_i$ is absolutely continuous with respect to the Lebesgue measure, so does the barycenter.
Such observation gives an interesting insight of the  experiments presented in this paper: when at least one of the two data $\mu_x$ or $\mu_y$ is smooth, then the obtained approximate  Wasserstein distance $\tilde W_p$ can be very close to the true one $W_p$  for small values of $\kappa$. Similar behaviour can be observed with semi-discrete optimal transport models where voronoi cells act like barycenters \cite{Merigot}. This point is illustrated in Figure \ref{fig:cv_data} with the comparison of relative errors between true and approximate distances obtained with increasing values of $\kappa$ for different scenarios involving ``random'' or ``smooth'' $2D$ data of dimensions $m=n=10^4$.  When at least one ``regular'' data is involved, then a small relative error $(\tilde W_p-W_p)/W_p<10^{-3}$ (i.e. $0.1\%$) is observed for $\kappa/n=0.015$.

\subsection{Transport refinement and Multi-scale approach}
As illustrated in Figure \ref{fig:cv_data}, the barycenter approach is not sufficient to produce an  accurate  approximation of the $p-$Wasserstein distance between any data. 
First notice that the approximation $\tilde W_p$ of the Wasserstein distance can  be easily improved using the following result that directly considers the transport matrix between $\mu_x$ and $\mu_y$.

\begin{proposition}\label{tot_mat}
Let $\mu_{z}= \sum_{k=1}^\kappa w^z_k\delta_{z_k}$ and  $\gamma^x$ (resp. $\gamma^y$) be an optimal transport matrix from $\mu_x$ (resp. $\mu_y$) to $\mu_z$. 
Let also  $\hat \gamma$ be the transport matrix between $\mu_x$ and $\mu_y$ defined as   $\hat \gamma= \gamma^xD(\gamma^y ) ^\top$, with the rescaling $D^{-1}=\diag(w^z)$ given by the  weights $w^z_k=\sum_i\gamma^x_{ik}=\sum_j\gamma^y_{jk}$. Then the following relation holds
\begin{equation}\label{rell2}\left(\sum_{ij}\hat \gamma_{ij} \|x_i-y_j\|^p\right)^{1/p}\leq W_p(\mu_x,\mu_{z})+W_p(\mu_y,\mu_{z}).
\end{equation} 
\end{proposition}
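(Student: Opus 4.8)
The plan is to identify \eqref{rell2} as a discrete instance of the classical gluing (composition) lemma for optimal transport plans, and to prove it by lifting $\hat\gamma$ to a coupling supported on triples $(x_i,y_j,z_k)$ before applying Minkowski's inequality.

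First I would record the elementary fact that $\hat\gamma$ is itself an admissible plan between $\mu_x$ and $\mu_y$: with $w^z_k=\sum_i\gamma^x_{ik}=\sum_j\gamma^y_{jk}$ one has $(\gamma^y)^\top 1_n=w^z$ and $D\,w^z=1_\kappa$, whence $\hat\gamma\,1_n=\gamma^xD w^z=\gamma^x1_\kappa=w^x$ and, symmetrically, $\hat\gamma^\top1_m=w^y$; indices $k$ with $w^z_k=0$ carry vanishing columns of $\gamma^x$ and $\gamma^y$ and are simply discarded, as in Section~\ref{sec:up_z}. The key object is then the nonnegative tensor $\pi_{ijk}=\gamma^x_{ik}\gamma^y_{jk}/w^z_k$, which by construction satisfies $\sum_k\pi_{ijk}=\hat\gamma_{ij}$, $\sum_j\pi_{ijk}=\gamma^x_{ik}$ and $\sum_i\pi_{ijk}=\gamma^y_{jk}$. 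Viewing $\pi$ as a probability measure on the set of triples, the last two marginal identities together with the optimality of $\gamma^x$ and $\gamma^y$ give
$$\sum_{ijk}\pi_{ijk}\|x_i-z_k\|^p=\langle\gamma^x,c^{xz}\rangle=W^p_p(\mu_x,\mu_z),\qquad\sum_{ijk}\pi_{ijk}\|z_k-y_j\|^p=\langle\gamma^y,c^{yz}\rangle=W^p_p(\mu_y,\mu_z).$$

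The last and only substantive step combines the triangle inequality on $\R^d$ with Minkowski's inequality in $L^p(\pi)$. Since $\|x_i-y_j\|\le\|x_i-z_k\|+\|z_k-y_j\|$ for every triple, and using $\sum_k\pi_{ijk}=\hat\gamma_{ij}$,
$$\sum_{ij}\hat\gamma_{ij}\|x_i-y_j\|^p=\sum_{ijk}\pi_{ijk}\|x_i-y_j\|^p\le\sum_{ijk}\pi_{ijk}\bigl(\|x_i-z_k\|+\|z_k-y_j\|\bigr)^p;$$
the right-hand side is the $p$-th power of the $L^p(\pi)$-norm of the sum of the functions $(i,j,k)\mapsto\|x_i-z_k\|$ and $(i,j,k)\mapsto\|z_k-y_j\|$, so Minkowski (valid for $p\ge1$) bounds it by $\bigl(W_p(\mu_x,\mu_z)+W_p(\mu_y,\mu_z)\bigr)^p$ thanks to the two identities above. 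Taking $p$-th roots yields \eqref{rell2}. Incidentally, admissibility of $\hat\gamma$ also gives the complementary bound $W_p(\mu_x,\mu_y)\le\bigl(\sum_{ij}\hat\gamma_{ij}\|x_i-y_j\|^p\bigr)^{1/p}$, so the glued cost is squeezed between $W_p(\mu_x,\mu_y)$ and $W_p(\mu_x,\mu_z)+W_p(\mu_y,\mu_z)$.

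The only real obstacle is this final step: the naive bound $(a+b)^p\le 2^{p-1}(a^p+b^p)$ would produce a spurious factor $2^{p-1}$ and miss \eqref{rell2} entirely, so one must work with the lifted measure $\pi$ and invoke Minkowski's inequality there rather than a pointwise estimate. Everything else — checking the three marginals of $\pi$, handling the degenerate case $w^z_k=0$, and identifying the two partial sums with $W^p_p(\mu_x,\mu_z)$ and $W^p_p(\mu_y,\mu_z)$ — is routine bookkeeping, but it is exactly what makes the rescaling $D^{-1}=\diag(w^z)$ in the definition of $\hat\gamma$ the natural choice.
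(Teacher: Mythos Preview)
Your argument is correct and follows essentially the same route as the paper: both verify that $\hat\gamma$ is admissible, lift to the tensor $\pi_{ijk}=\gamma^x_{ik}\gamma^y_{jk}/w^z_k$, apply the triangle inequality $\|x_i-y_j\|\le\|x_i-z_k\|+\|z_k-y_j\|$, and then use Minkowski in $L^p(\pi)$. The only cosmetic difference is that the paper reproves Minkowski's inequality explicitly via the H\"older splitting $\|x_i-y_j\|^p=\|x_i-y_j\|^{p-1}\|x_i-y_j\|$, whereas you invoke it as a black box; your framing as the gluing lemma is a bit more conceptual but the content is identical.
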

\vspace*{0.2cm}

\noindent
\begin{proof}
The matrix $\hat\gamma_{ij}=\sum_k \gamma^x_{ik}\gamma^y_{jk}/w^z_k$ is an admissible  transport matrix between $\mu_x$ and $\mu_y$, since $\sum_j \hat \gamma_{ij}=\sum_k\gamma^x_{ik}=w^x_i$ and $\sum_i \hat \gamma_{ij}=\sum_k\gamma^y_{jk}=w^y_j$. Following the proof of Minkowski's inequality:
\begin{equation*}
\begin{split}
&\underset{ij}{\Sigma}\hat \gamma_{ij} \|x_i-y_j\|^p=\underset{ijk}{\Sigma} \gamma^x_{ik}\gamma^y_{jk}/w^z_k   \|x_i-y_j\|^{p-1}\|x_i-y_j\|\\
\leq&\underset{ijk}{\Sigma} (\gamma^x_{ik}\gamma^y_{jk}/w^z_k)^{(p-1)/p+1/p} \|x_i-y_j\|^{p-1}(\|x_i-z_k\|+\|y_j-z_k\|)\\
\leq&\left(\underset{ijk}{\Sigma}\gamma^x_{ik}\gamma^y_{jk}/w^z_k \|x_i-y_j\|^{p} \right)^{(p-1)/p} \left( \left( \underset{ijk}{\Sigma}\gamma^x_{ik}\gamma^y_{jk}/w^z_k\|x_i-z_k\|^p\right)^{1/p}\hspace{-0.2cm}+ \left( \underset{ijk}{\Sigma}\gamma^x_{ik}\gamma^y_{jk}/w^z_k \|y_j-z_k\|^p\right)^{1/p}\right)\\
\leq&\left(\underset{ij}{\Sigma}\tilde\gamma_{ij} \|x_i-y_j\|^{p} \right)^{(p-1)/p} \left( \left(\underset{ik}{\Sigma}\gamma^x_{ik}\|x_i-z_k\|^p\right)^{1/p}+ \left( \underset{jk}{\Sigma}\gamma^y_{jk} \|y_j-z_k\|^p\right)^{1/p}\right),
\end{split}
\end{equation*}
then  gives \eqref{rell2}. Notice that the proof of the triangle inequality \cite{Clement} is here also obtained,  since $W_p(\mu_x,\mu_y)\leq(\underset{ij}{\Sigma}\tilde \gamma_{ij} \|x_i-y_j\|^p)^{1/p}$.
\end{proof}
\vspace*{0.5cm}

\noindent
From this proposition, the approximation $\hat  W_p(\mu_x,\mu_y)$ corresponding to a barycenter $\mu_{1/2} =\sum_{k=1}^\kappa w^z_k\delta_{z_k}$  solution of \eqref{pb:bar_reg} with  transport matrices $\gamma^x$ and $\gamma^y$ is  defined as
\begin{equation}\label{rell3}W_p(\mu_x,\mu_y)\leq\hat  W_p(\mu_x,\mu_y)= \langle \gamma^x\diag(w^z)^{-1}(\gamma^y ) ^\top ,c^{xy}\rangle^{1/p}\leq\tilde  W_p(\mu_x,\mu_y).\end{equation}
 For matching or interpolation purposes, $\hat\gamma= \gamma^x(\diag(w^z))^{-1}(\gamma^y ) ^\top$ gives  a sparse
 approximation of the optimal transport matrix. 
However, as illustrated in Figure \ref{fig:block}, such approach maps all locations $x_i$ and $y_j$ that transit by $z_k$, resulting in a poor block transport matrix for small values of $\kappa$. 

\begin{figure}[ht!]
\begin{center}
\begin{tabular}{cccccc}
\includegraphics[width=0.12\linewidth,height=0.12\linewidth]{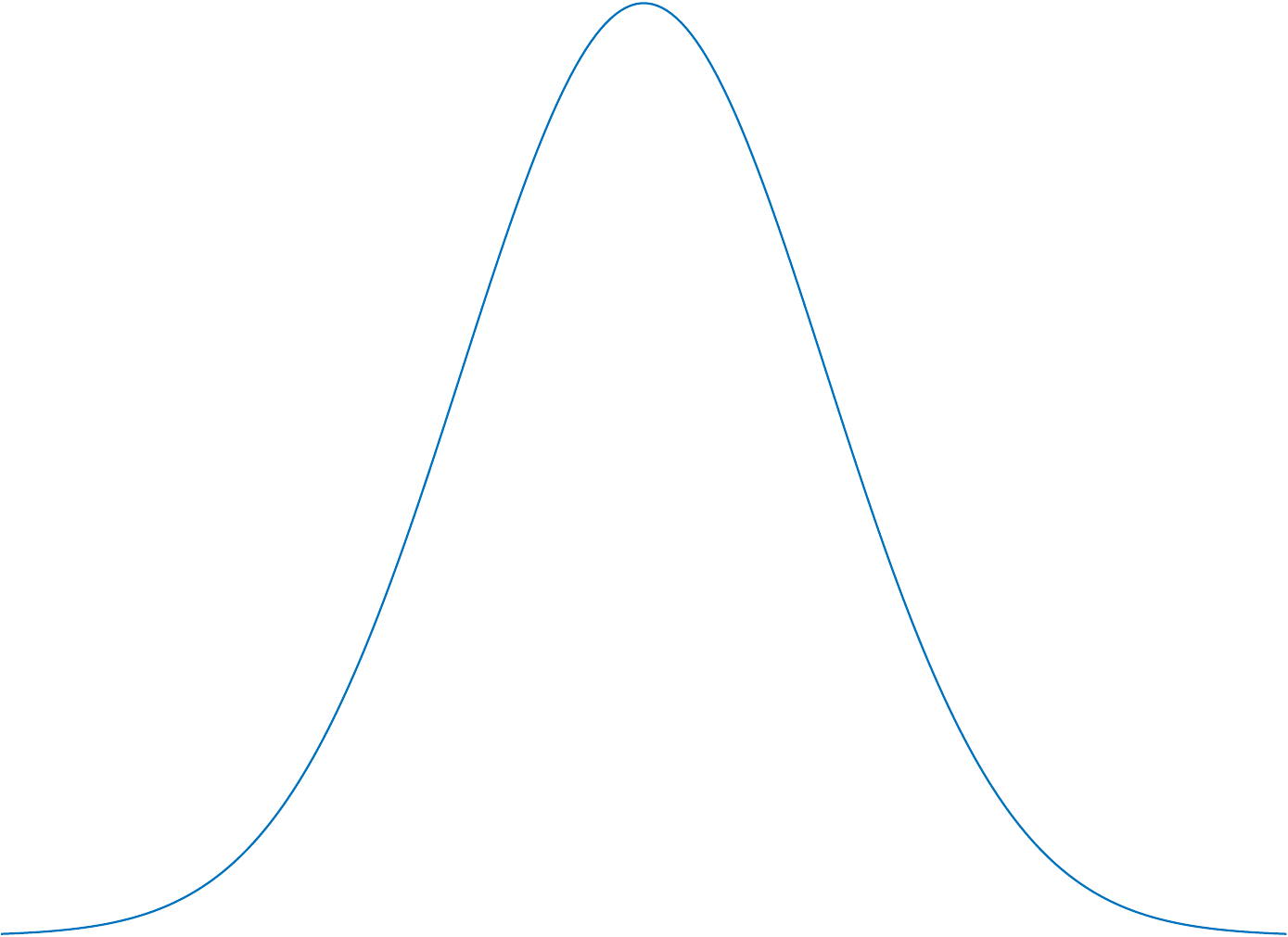}&\includegraphics[width=0.12\linewidth,height=0.12\linewidth]{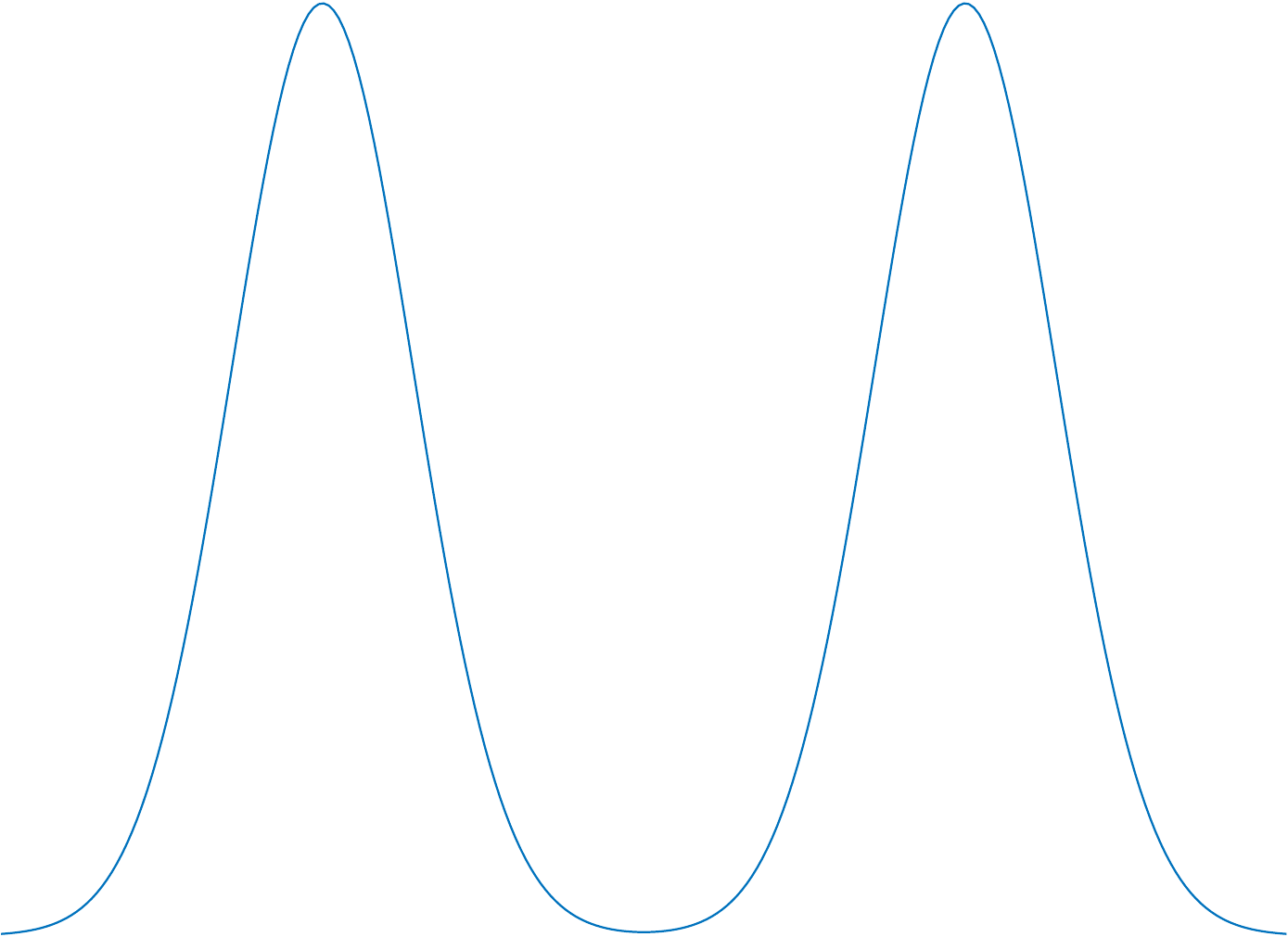}&\includegraphics[height=0.15\linewidth]{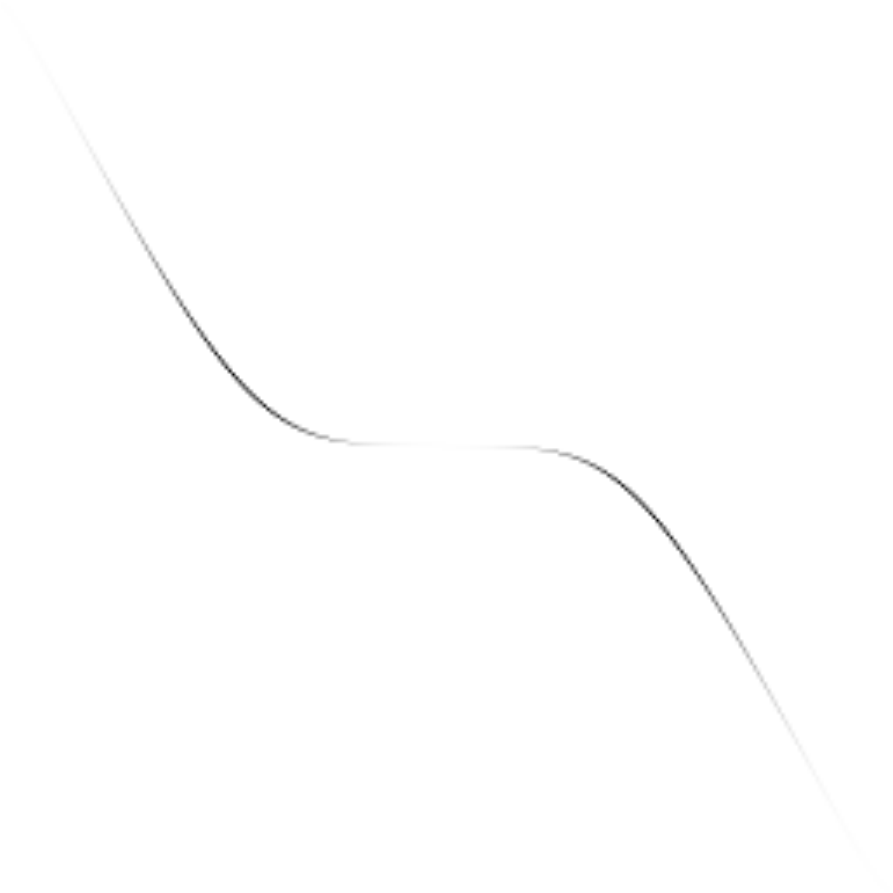}&\includegraphics[height=0.15\linewidth]{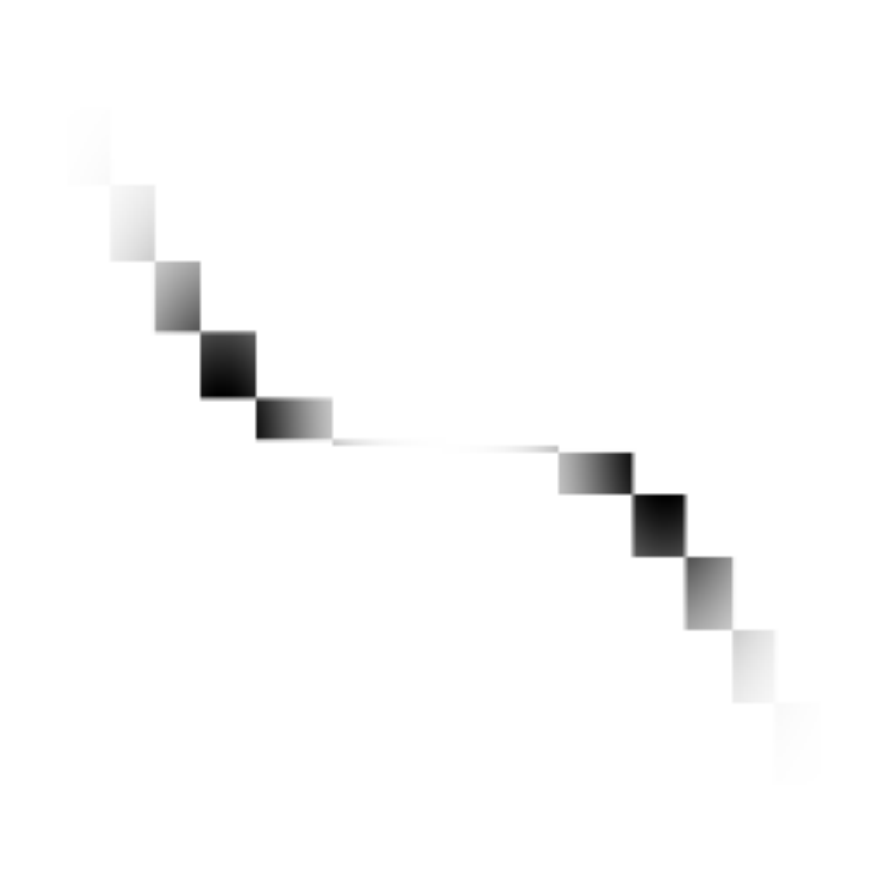}&\includegraphics[height=0.15\linewidth]{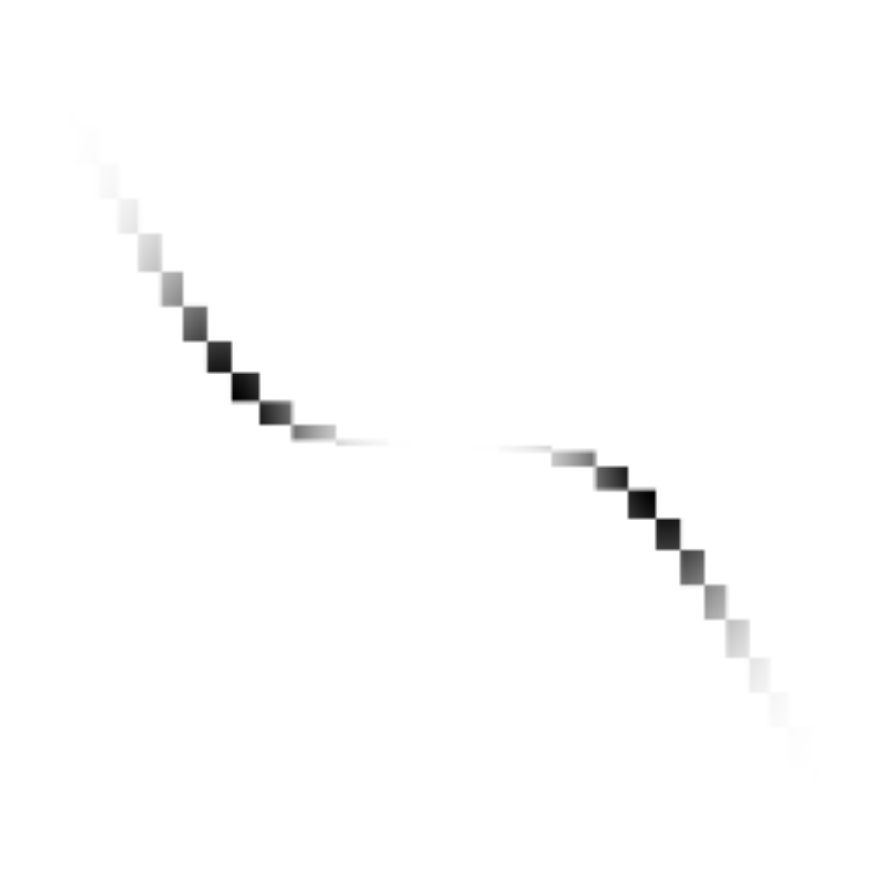}&\includegraphics[height=0.15\linewidth]{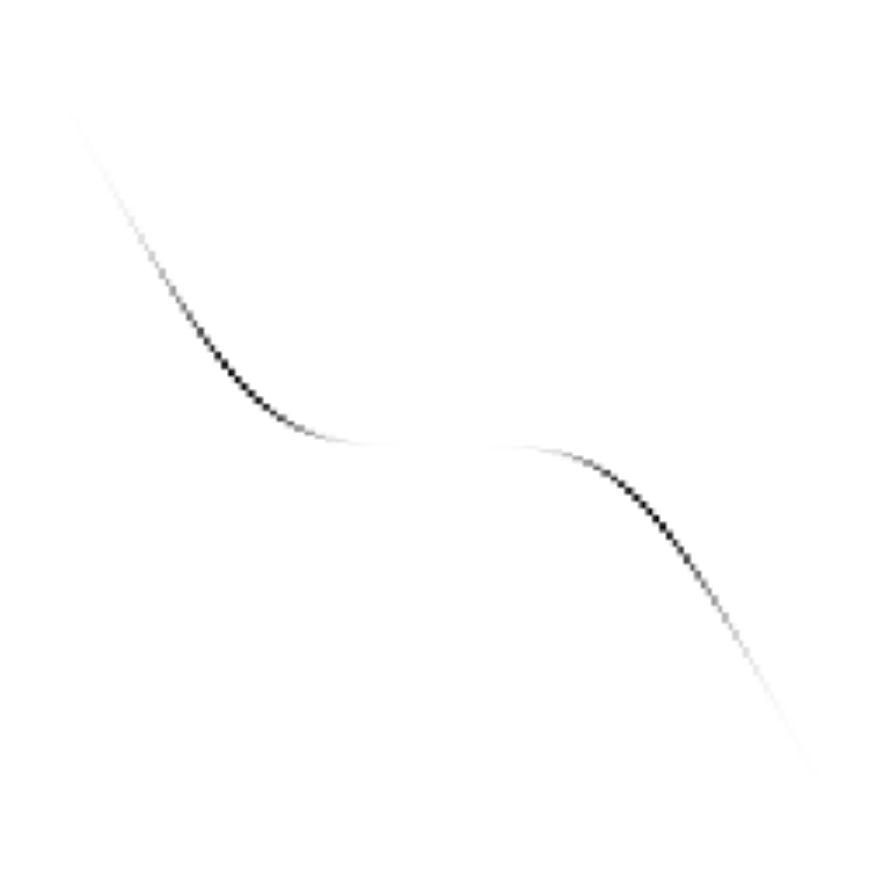}\\
$\mu_x$&$\mu_y$&True transport&$\kappa=16$&$\kappa=32$&$\kappa=128$
\end{tabular}
\caption{\label{fig:block} Illustration of the block transport matrices $ \hat \gamma$ estimated from data $\mu_x$ and $\mu_y$ through  transshipment  for increasing values of $\kappa$.}
\end{center}
\end{figure}

In order to get a sparser and more accurate approximation of  the optimal transport matrix and associated distance, it is necessary to ``untie'' the links between locations passing through $z_k$.
To do so, a solution is to consider $\kappa$ optimal transportation sub-problems, by refining the transport of the mass transshipped through $z_k$. The barycenter approach then acts as a clustering and for each intermediate location $z_k$, the $p-$Wasserstein distance between the following partial discrete densities is computed:
\begin{equation}
\begin{split}
\mu_x^k=\sum_{i=1}^m\gamma^x_{ik}\delta_{x_i}&\hspace{1cm}
\mu_y^k=\sum_{j=1}^n\gamma^y_{jk}\delta_{y_j}, 
\end{split}
\end{equation}
where the number of active dimensions are expected to be reduced: $m_k=\#\{\gamma^x_{ik}>0\}<m$ and $n_k=\#\{\gamma^y_{jk}>0\}<n$.
The multi-scale approach is then performed as follows.
The Wasserstein distance $W_p(\mu_x^k,\mu_y^k)$ is estimated exactly with network simplex \cite{Bonneel} if the sum of $m_k$ and $n_k$ is small enough. Otherwise the barycenter approach is recursively applied to the subproblem.  From numerical experiments, the threshold $n_k+m_k<2000$ has been chosen to reach the best compromise between numerical accuracy and computational cost. The whole process is illustrated in Figure \ref{fig:trans}  and detailed in Algorithm \ref{algo}.

\begin{figure}[ht!]
\begin{center}
\begin{tabular}{ccc}
\includegraphics[width=0.31\linewidth]{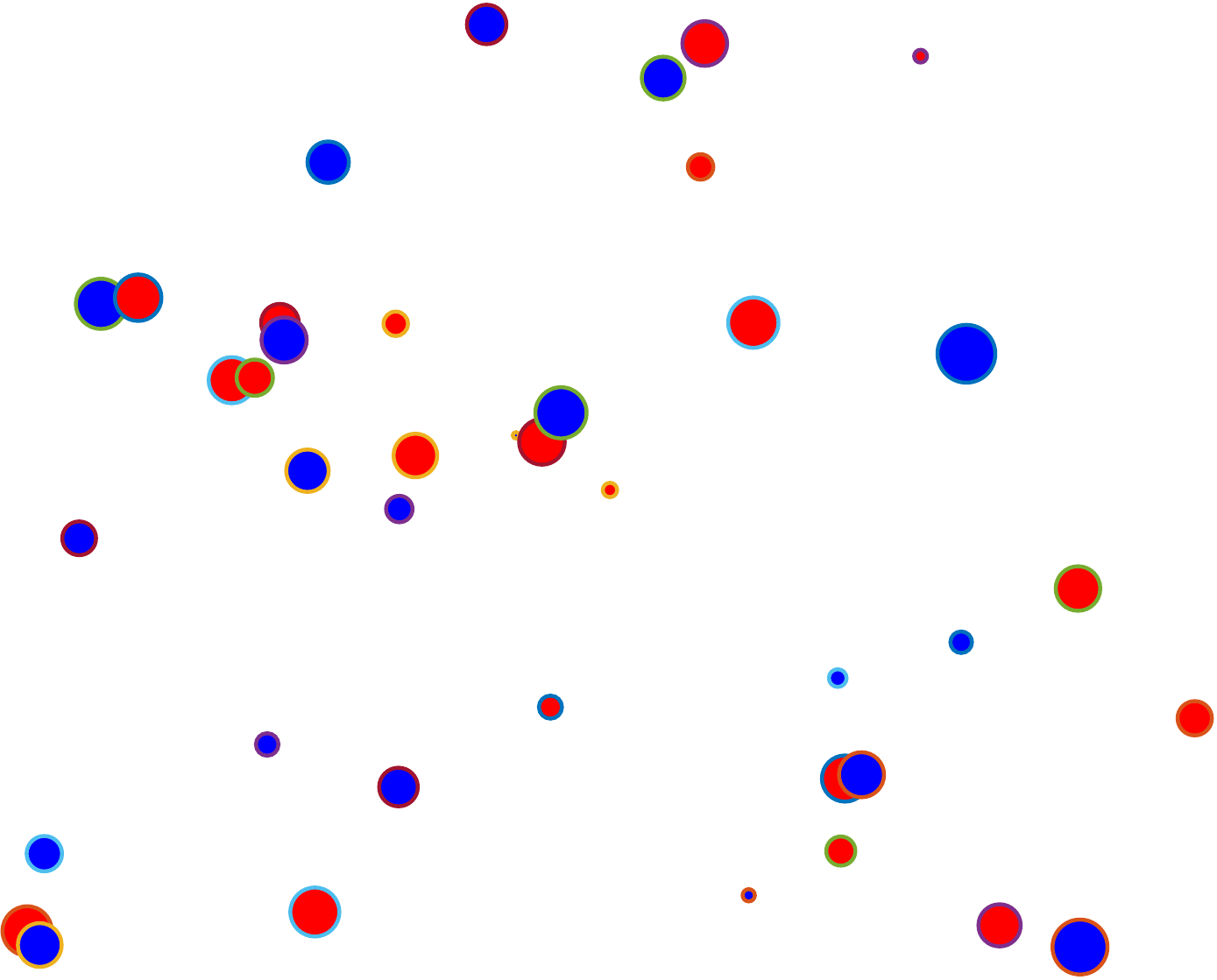}&\hspace{0.2cm}\includegraphics[width=0.31\linewidth]{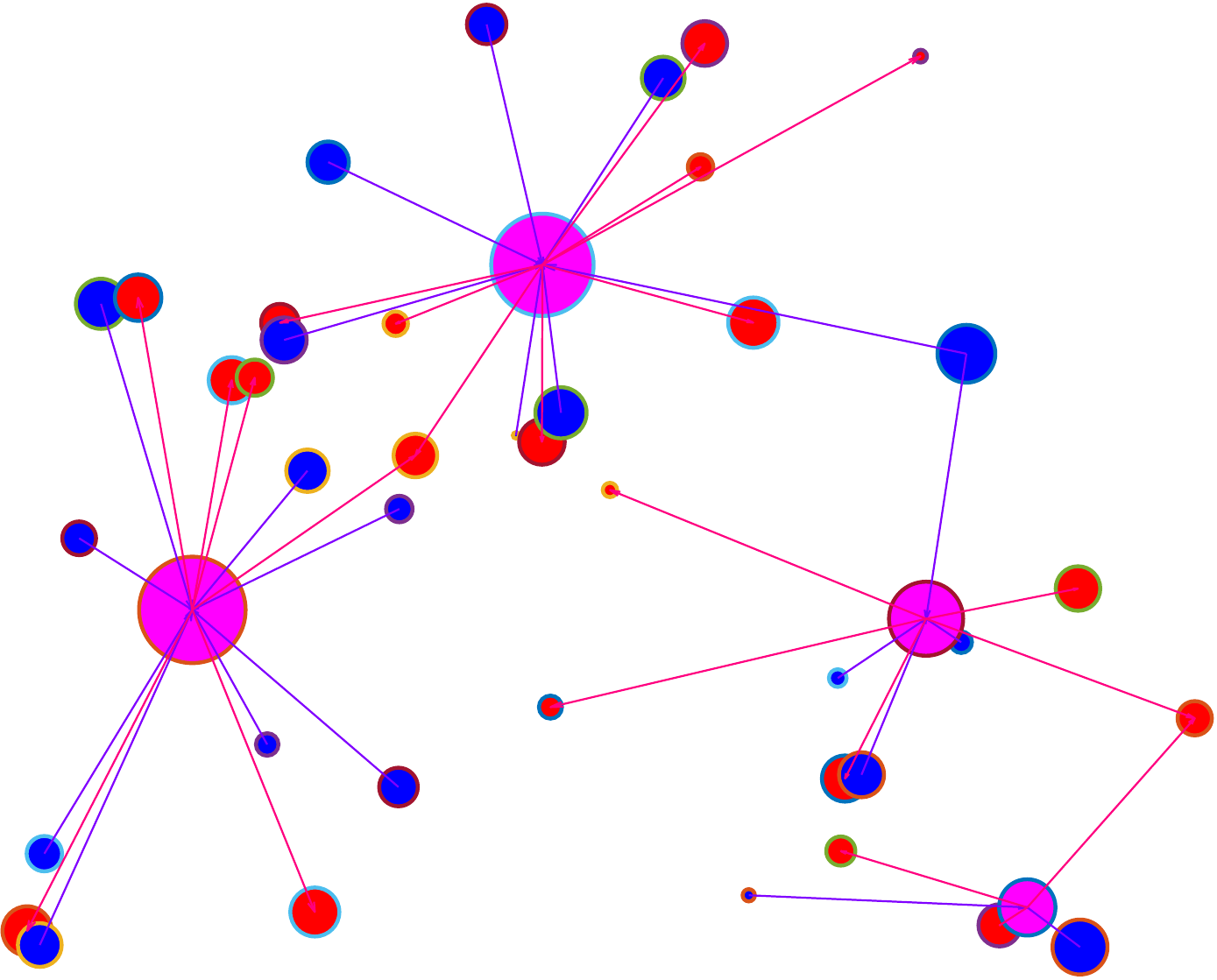}&\hspace{0.2cm}\includegraphics[width=0.31\linewidth]{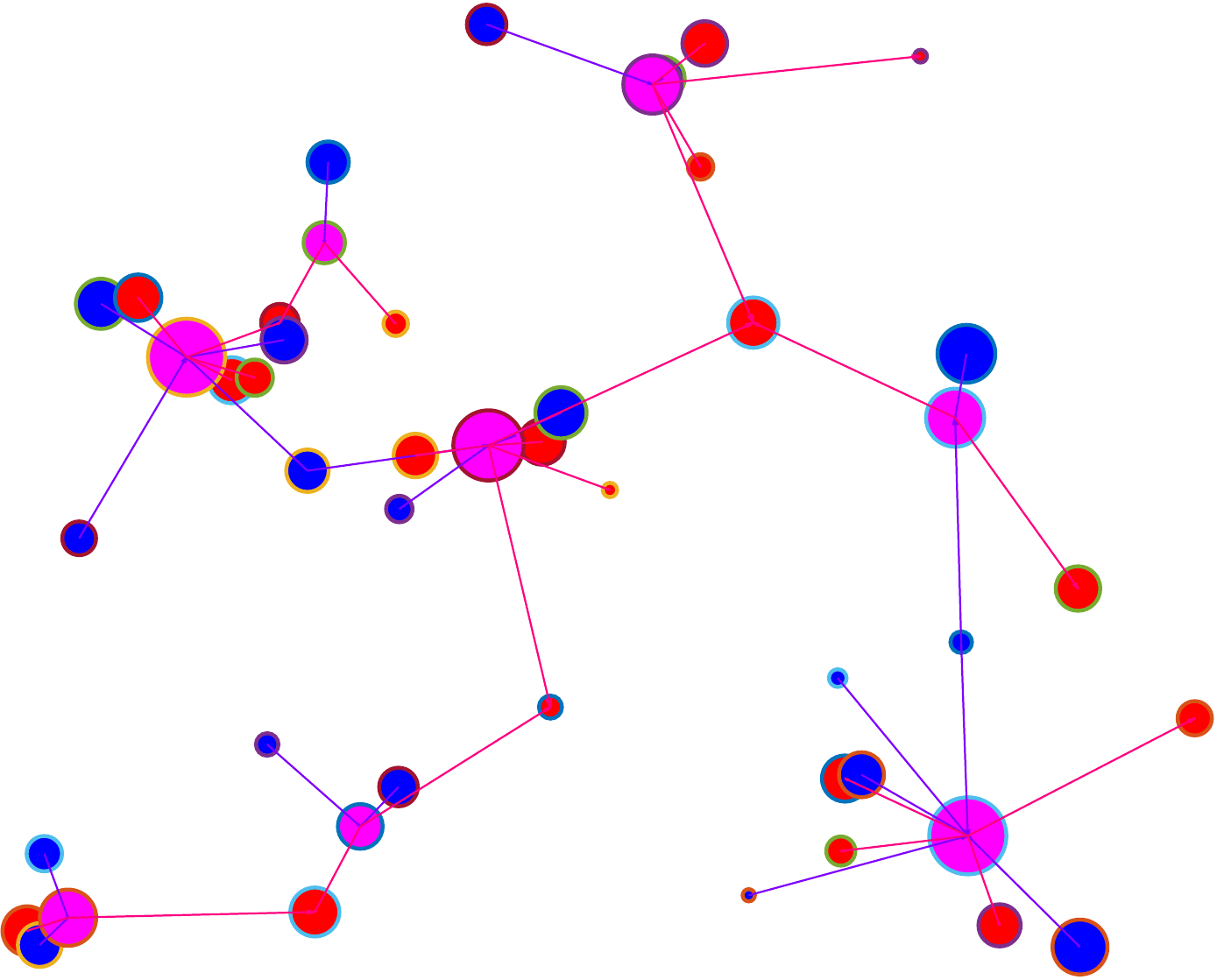}\\
Data&\hspace{0.2cm}Transshipment&\hspace{0.2cm}Transshipment\\
&\hspace{0.2cm}$\kappa=4$&\hspace{0.2cm}$\kappa=8$\vspace{0.4cm}\\
\includegraphics[width=0.31\linewidth]{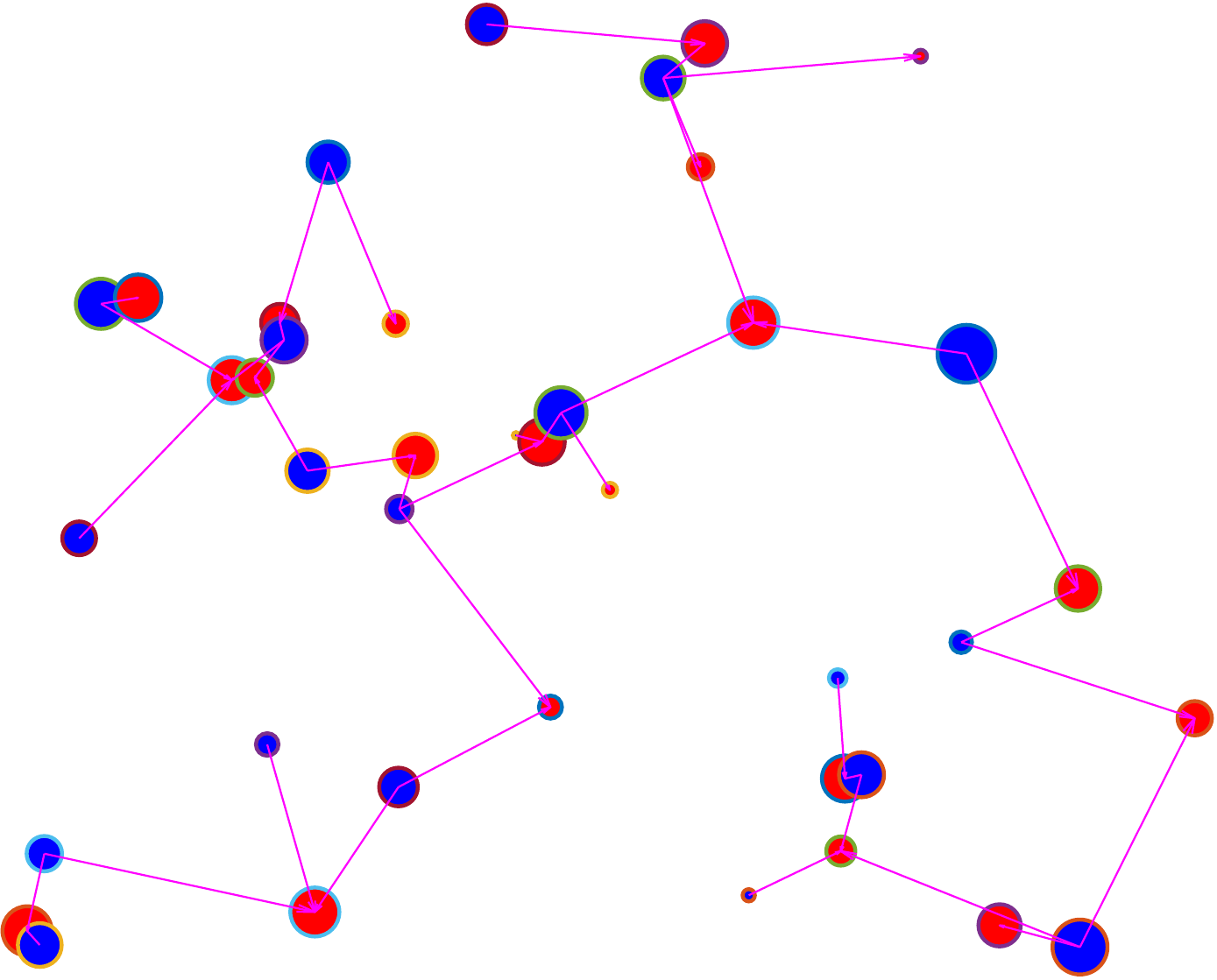}&\includegraphics[width=0.31\linewidth]{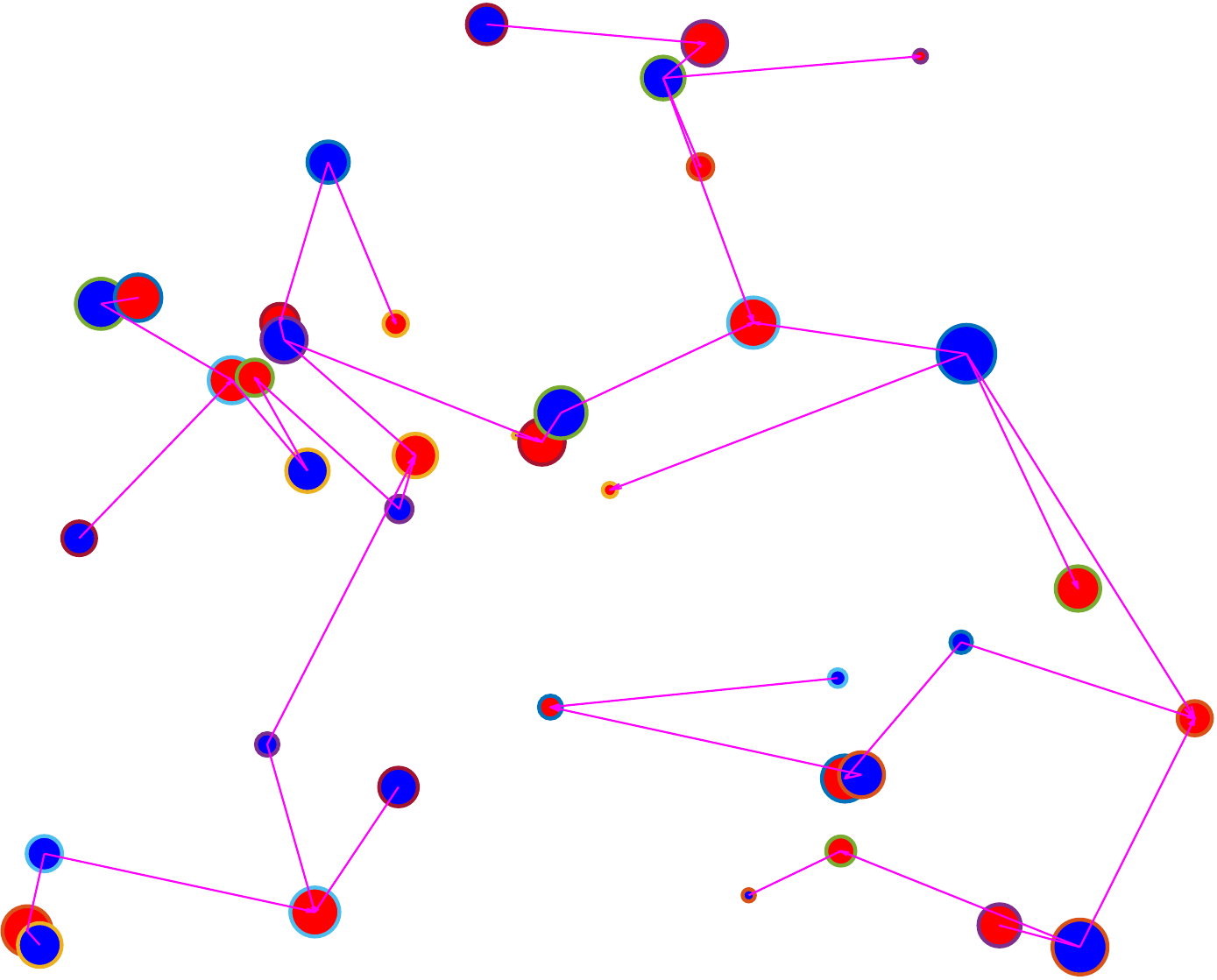}&\includegraphics[width=0.31\linewidth]{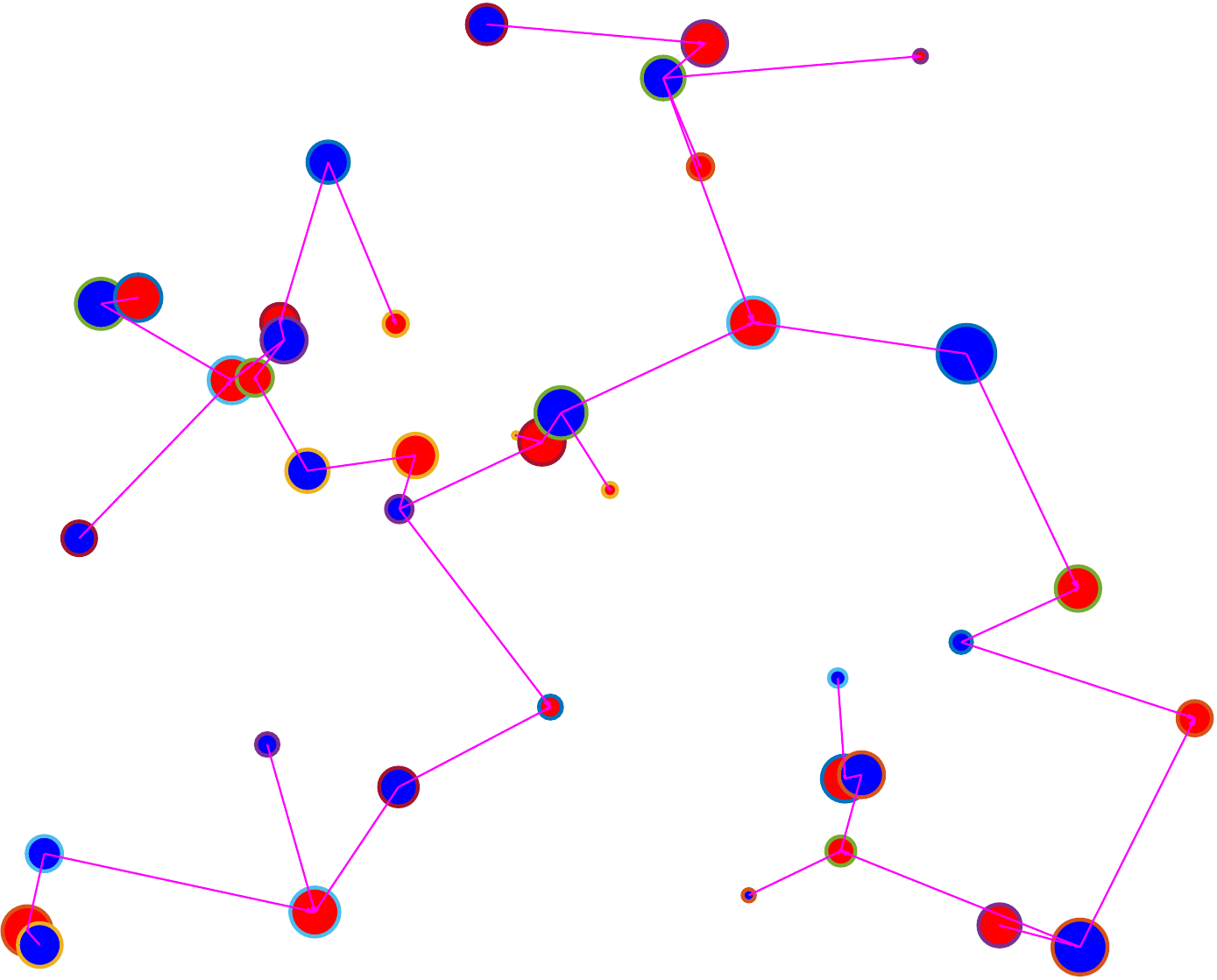}\\
Optimal transportation&\hspace{0.2cm}Refined transportation&\hspace{0.2cm}Refined transportation\\
&\hspace{0.2cm}$\kappa=4$&\hspace{0.2cm}$\kappa=8$
\end{tabular}
\caption{\label{fig:trans} Illustration of transportation obtained from refinement of transshipment with $\kappa=4$  and $\kappa=8$ intermediate locations. edges indicate there is a mass transport (i.e.  $\gamma_{ij}>0$) between locations $x_i$ (in blue) and $y_j$ (in red). Optimal Transportation is here recovered when refining the transshipment solution  obtained with $\kappa=8$.}
\end{center}
\end{figure}

\begin{algorithm}[ht!]
\caption{Multi-scale estimation of approximate $p$-Wasserstein distance $\hat W_p^p(\mu_x,\mu_y)$}
\label{algo}
\begin{algorithmic}[1]
\Procedure{ApproxWp}{$x$, $w^x$, $y$, $w^y$, $p$, $\kappa$}
\State ($\tilde W$, $\gamma^x,\gamma^y)$=\textsc{BarWp} ({$x$, $w^x$, $y$, $w^y$, $p$, $\kappa$})\Comment{Clustering with Algo. \ref{algo_bar}}
\State Initialize $\hat W=0$, $\hat \gamma={\bf 0}_{m\times n}$
\For {$k=1$ to $\kappa$}\Comment{Can be done in parallel}
\State  Set $\mathcal{I}^k=\{i|\, \gamma^x_{ik}>0\}$, $\mathcal{J}^k=\{j|\, \gamma^y_{jk}>0\}$
\State Set $m_k=\#\mathcal{I}^k$, $n_k=\#\mathcal{J}^k$
\State Set $x^k=x_{\mathcal{I}^k}$, $y^k=y_{\mathcal{J}^k}$, $w^{x^k}= \gamma^x_{\mathcal{I}^kk}$, $w^{y^k}=\gamma^y_{\mathcal{J}^kk}$
\If{$m_k+n_k<T$} 
\State $(W^k,\gamma^k)=$\textsc{Wp}($x^k$, $w^{x^k}$, $y^k$, $w^{y^k}$, $p$)\Comment{Compute exact $W_p^p$ with Algo. \ref{algoW}}
\Else 
\State $(W^k,\gamma^k)=$\textsc{ApproxWp}($x^k$, $w^{x^k}$, $y^k$, $w^{y^k}$, $p$, $\kappa$)\Comment{Approximation of $W_p^p$}
\EndIf  
\State $\hat W=\hat W+W^k$
\State $\hat\gamma_{\mathcal{I}^k\mathcal{J}^k}=\gamma^k$
\EndFor
\State  \Return $\hat W$, $\hat\gamma$
\EndProcedure
\end{algorithmic}
\end{algorithm}

\section{Experiments}
True distance $2-$Wasserstein distances \eqref{def:Wp_discrete} are here compared with the approximated ones computed with the multi-scale procedure of Algorithm \ref{algo}. Exact and approximate distances are respectively obtained with the \CC network simplex implementation of \cite{Bonneel} based on the  graph library LEMON \cite{Lemon} and the proposed   transshipment extension \footnote{The code is available at \url{https://www.math.u-bordeaux.fr/~npapadak/GOTMI/codes.php}.}.  The experiments have been realized on  a standard Macbook with a processor Intel Core i7 2,2 GHz and 16 Go of RAM. 
\paragraph{Accuracy}

In order to study the performance of the proposed approximation $\hat W_p$, the  $32\times 32$ and $64\times 64$ and $128\times 128$ images of the  Benchmark \cite{bench} have been considered.  For each image size, this data set contains $10$ classes of different densities, and each class contains $10$ images. The exact and approximate distances have been computed between all (i.e. $\approx 5000$) possibles pairs of images. This has been done for different values of $\kappa$ and a threshold of $T=2000$ in Algorithm 3.
For each experiment, the mean and median relative errors between approximate  and exact methods are computed and presented in Table \ref{mean_res_bench}. 

\begin{table}[ht!]\begin{center}
\begin{tabular}{c|c|c|c|c|c}
&\multicolumn{2}{c|}{$\kappa=4$}&\multicolumn{2}{c|}{$\kappa=16$}\\
&Mean &Median&Mean &Median\\
\hline
$n=32\times 32=1024$&$2.70\%$&$2.18\%$&$1.61\%$&$0.90\%$\\
\hline
$n=64\times 64=4096$&$3.56\%$ & $2.61\%$&$1.31\%$ & $0.81\%$\\
\hline
$n=128\times 128=16384$&$3.49\%$&$2.34\%$&$1.38\%$&$0.82\%$\\
\hline
\end{tabular}
\caption{\label{mean_res_bench} Mean and median relative errors between approximate and true EMD  on the Benchmark \cite{bench} for  $32\times 32$, $64\times 64$ and $128\times 128$ images and different values of $\kappa$.}
\end{center}
\end{table}

A more detailed presentation is then given in Figure \ref{inter_class_bench}, with the mean relative errors over  intra and inter classes experiments in the case of $128\times 128$ images. As can be observed, significant errors are obtained when unstructured random data (classes 4 and 10) are involved. In all other cases, the relative errors are very low.

\begin{figure}[ht!]
\begin{center}
{
\setlength\tabcolsep{1.5pt}
\def\arraystretch{1.15}
\begin{tabular}{c|c|c|c|c|c|c|c|c|c|c|}&\includegraphics[width=0.06\linewidth]{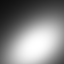}&\includegraphics[width=0.06\linewidth]{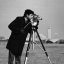}&\includegraphics[width=0.06\linewidth]{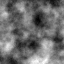}&\includegraphics[width=0.06\linewidth]{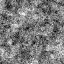}&\includegraphics[width=0.06\linewidth]{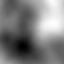}&\includegraphics[width=0.06\linewidth]{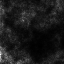}&\includegraphics[width=0.06\linewidth]{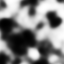}&\includegraphics[width=0.06\linewidth]{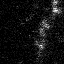}&\includegraphics[width=0.06\linewidth]{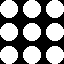}&\includegraphics[width=0.06\linewidth]{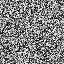}\vspace{-0.1cm}\\
\hline 
{\raisebox{-.42\height}{\includegraphics[width=0.06\linewidth]{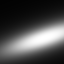}}}&\cellcolor{gray!3}$0.4\%$&\cellcolor{gray!2}$0.4\%$&\cellcolor{gray!3}$0.4\%$&\cellcolor{gray!3}$0.4\%$&\cellcolor{gray!2}$0.3\%$&\cellcolor{gray!2}$0.3\%$&\cellcolor{gray!2}$0.4\%$&\cellcolor{gray!3}$0.5\%$&\cellcolor{gray!3}$0.5\%$&\cellcolor{gray!2}$0.4\%$\\
\hline 
{\raisebox{-.42\height}{\includegraphics[width=0.06\linewidth]{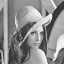}}}&\cellcolor{gray!2}$0.4\%$&\cellcolor{gray!8}$1.3\%$&\cellcolor{gray!5}$0.9\%$&\cellcolor{gray!5}$0.8\%$&\cellcolor{gray!4}$0.7\%$&\cellcolor{gray!4}$0.6\%$&\cellcolor{gray!8}$1.4\%$&\cellcolor{gray!7}$1.2\%$&\cellcolor{gray!7}$1.2\%$&\cellcolor{gray!4}$0.6\%$\\
\hline 
{\raisebox{-.42\height}{\includegraphics[width=0.06\linewidth]{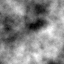}}}&\cellcolor{gray!3}$0.4\%$&\cellcolor{gray!5}$0.9\%$&\cellcolor{gray!8}$1.2\%$&\cellcolor{gray!10}$1.6\%$&\cellcolor{gray!4}$0.6\%$&\cellcolor{gray!3}$0.5\%$&\cellcolor{gray!9}$1.5\%$&\cellcolor{gray!7}$1.1\%$&\cellcolor{gray!8}$1.4\%$&\cellcolor{gray!6}$1.0\%$\\
\hline 
{\raisebox{-.42\height}{\includegraphics[width=0.06\linewidth]{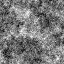}}}&\cellcolor{gray!3}$0.4\%$&\cellcolor{gray!5}$0.8\%$&\cellcolor{gray!10}$1.6\%$&\cellcolor{gray!17}$2.7\%$&\cellcolor{gray!5}$0.8\%$&\cellcolor{gray!4}$0.6\%$&\cellcolor{gray!12}$2.0\%$&\cellcolor{gray!7}$1.1\%$&\cellcolor{gray!10}$1.6\%$&\cellcolor{gray!24}$3.8\%$\\
\hline 
{\raisebox{-.42\height}{\includegraphics[width=0.06\linewidth]{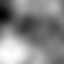}}}&\cellcolor{gray!2}$0.3\%$&\cellcolor{gray!4}$0.7\%$&\cellcolor{gray!4}$0.6\%$&\cellcolor{gray!5}$0.8\%$&\cellcolor{gray!3}$0.4\%$&\cellcolor{gray!4}$0.6\%$&\cellcolor{gray!2}$0.4\%$&\cellcolor{gray!6}$1.0\%$&\cellcolor{gray!3}$0.4\%$&\cellcolor{gray!1}$0.2\%$\\
\hline 
{\raisebox{-.42\height}{\includegraphics[width=0.06\linewidth]{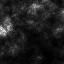}}}&\cellcolor{gray!2}$0.3\%$&\cellcolor{gray!4}$0.6\%$&\cellcolor{gray!3}$0.5\%$&\cellcolor{gray!4}$0.6\%$&\cellcolor{gray!4}$0.6\%$&\cellcolor{gray!4}$0.6\%$&\cellcolor{gray!11}$1.8\%$&\cellcolor{gray!7}$1.1\%$&\cellcolor{gray!9}$1.5\%$&\cellcolor{gray!7}$1.2\%$\\
\hline 
{\raisebox{-.42\height}{\includegraphics[width=0.06\linewidth]{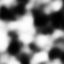}}}&\cellcolor{gray!2}$0.4\%$&\cellcolor{gray!8}$1.4\%$&\cellcolor{gray!9}$1.5\%$&\cellcolor{gray!12}$2.0\%$&\cellcolor{gray!2}$0.4\%$&\cellcolor{gray!11}$1.8\%$&\cellcolor{gray!11}$1.8\%$&\cellcolor{gray!9}$1.5\%$&\cellcolor{gray!9}$1.5\%$&\cellcolor{gray!11}$1.7\%$\\
\hline 
{\raisebox{-.42\height}{\includegraphics[width=0.06\linewidth]{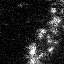}}}&\cellcolor{gray!3}$0.5\%$&\cellcolor{gray!7}$1.2\%$&\cellcolor{gray!7}$1.1\%$&\cellcolor{gray!7}$1.1\%$&\cellcolor{gray!6}$1.0\%$&\cellcolor{gray!7}$1.1\%$&\cellcolor{gray!9}$1.5\%$&\cellcolor{gray!5}$0.7\%$&\cellcolor{gray!7}$1.1\%$&\cellcolor{gray!7}$1.2\%$\\
\hline 
{\raisebox{-.42\height}{\includegraphics[width=0.06\linewidth]{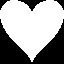}}}&\cellcolor{gray!3}$0.5\%$&\cellcolor{gray!7}$1.2\%$&\cellcolor{gray!8}$1.4\%$&\cellcolor{gray!10}$1.6\%$&\cellcolor{gray!3}$0.4\%$&\cellcolor{gray!9}$1.5\%$&\cellcolor{gray!9}$1.5\%$&\cellcolor{gray!7}$1.1\%$&\cellcolor{gray!13}$2.1\%$&\cellcolor{gray!37}$6.1\%$\\
\hline 
{\raisebox{-.42\height}{\includegraphics[width=0.06\linewidth]{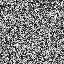}}}&\cellcolor{gray!2}$0.4\%$&\cellcolor{gray!4}$0.6\%$&\cellcolor{gray!6}$1.0\%$&\cellcolor{gray!24}$3.8\%$&\cellcolor{gray!1}$0.2\%$&\cellcolor{gray!7}$1.2\%$&\cellcolor{gray!11}$1.7\%$&\cellcolor{gray!7}$1.2\%$&\cellcolor{gray!37}$6.1\%$&\cellcolor{gray!60}$9.7\%$\\
\hline 
\end{tabular}

}
\caption{\label{inter_class_bench} Detailed mean relative errors for intra and inter classes tests on images of size $128\times 128$.\vspace{-0.4cm}}
\end{center}
\end{figure}

\paragraph{Computational cost}

In Figure \ref{compare_speed}, the running time for computing an approximate $2-$Wasserstein distance, with Algorithm \ref{algo} and $\kappa=16$, is compared  with the \CC network simplex implementation and its sparse multi-threaded extension proposed in  \cite{Bonneel}. The running times  become  asymptotically very interesting with the multi-threaded extension, but due to memory storage, these methods can not handle dimensions $n$  larger than $3.10^4$ on the  considered computer.
As transsshipment involves problems of size $n\kappa$, it can be applied to  data containing more dirac masses  and thus deal with one additional  order of magnitude  ($n=2.10^5$). With the proposed  full \CC implementation of the transshipment problem, the $\kappa$ sub-problems are solved successively. The provided Matlab interface calling the \CC code through a simple parfor loop  with $4$ workers is thus much faster. 
Optimal transshipment matrices being dense for small values of $\kappa$, it is counter-productive to consider sparse optimized implementation in the multi-scale framework.

\begin{figure}[ht!]
\begin{center}
\includegraphics[width=9.4cm]{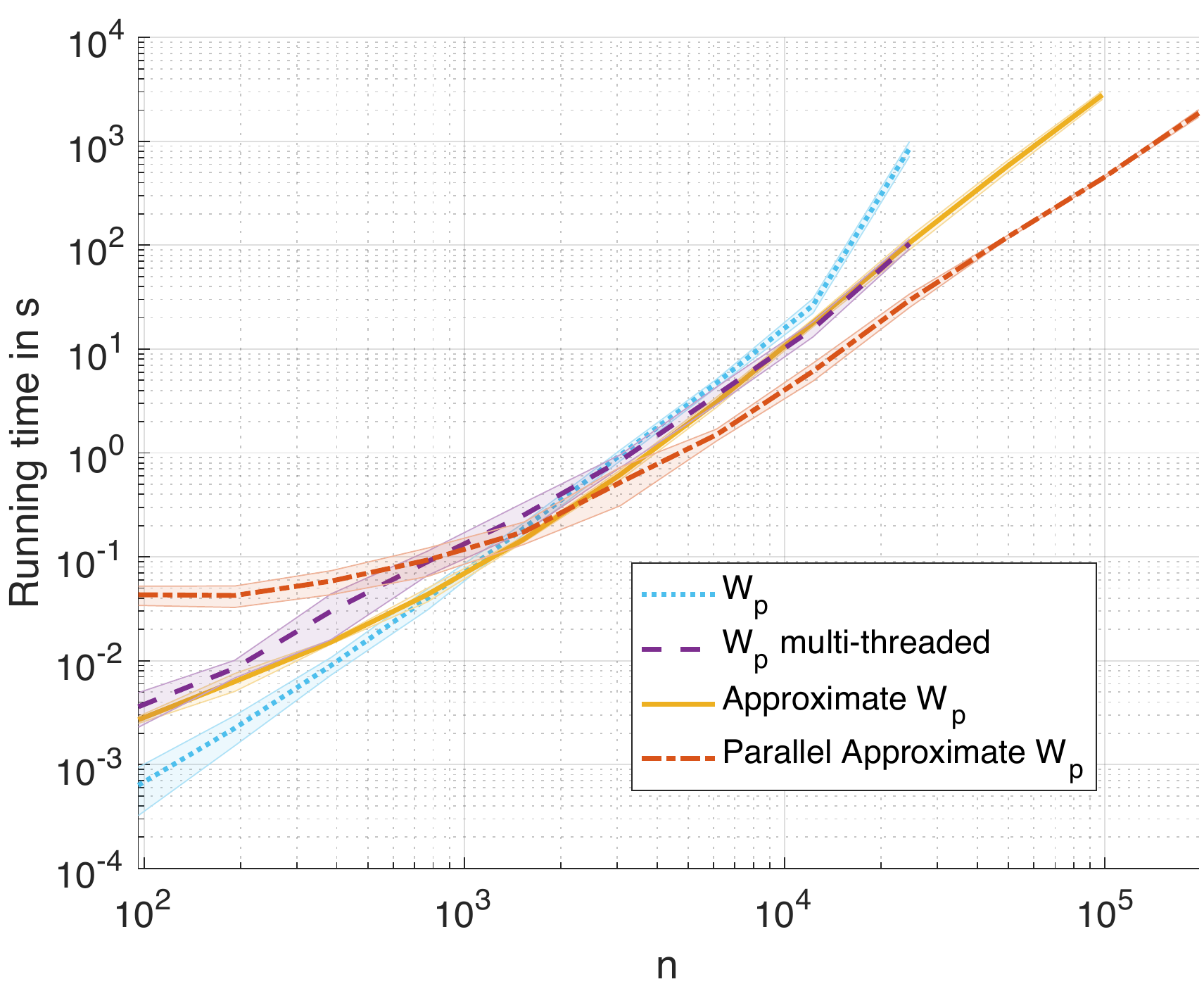}\vspace{-0.1cm}
\caption{\label{compare_speed}Comparison of running times for computing $2-$Wassersetin distances for different values of $n$: \CC network simplex \cite{Bonneel}, its multi-threaded extension  and the proposed multi-scale approximation (matlab \CC mex), that is the only one being able to handle high scale problems.\vspace{-0.25cm}}
\end{center}
\end{figure}
\section{Conclusion}
This paper presents an empirical method for approximating Wasserstein distances. It is based on existing concepts used in parallel works \cite{Weed,2018arXiv180507416A,2019arXiv190108949P}. The contribution is to provide an efficient multi-scale implementation  able to deal with unstructured point clouds while providing sparse transport matrices. Numerical experiments demonstrate  the accuracy of the computed approximate distance, while the involved computational cost are improved with respect to the literature.
As a perspective, several intermediary transhipment levels could be considered, in relation to branched transport \cite{bernot2008optimal}
It would also be of interest to add constraints encouraging an homogeneous repartition of the number of points transiting by each location $z_k$ of the barycenter, or at least a more uniform distribution of the barycenter weights $w^z_k$. The $\kappa$ multi-scale sub-problems that can be solved in parallel would  have similar dimensions, and theoretical guarantees on the overall running time could be given.

\paragraph{Acknowledgements}
This study has been carried out with financial support from the French
State, managed by the French National Research Agency (ANR GOTMI) (ANR-16-CE33-0010-01).
The project has also received funding from the European Union’s Horizon 2020 research and innovation programme under the Marie Skłodowska-Curie grant agreement No 777826.\vspace{-0.1cm}
\bibliographystyle{abbrv}

\end{document}